\def\H{\mathcal{H}}
\def\R{\mathbb{R}}
\def\and{\qquad\text{and}\qquad}
\def\({\left(}
\def\){\right)}
\def\al{\alpha}
\def\be{\beta}
\def\cto{\hookrightarrow}
\def\td{{\rm d}}
\def\be{\begin{equation}}
\def\ee{\end{equation}}
\def\ben{\begin{eqnarray}}
\def\een{\end{eqnarray}}
\newcounter{cnum}
\newcounter{cnum}[chapter]
\def\C{\@ifnextchar[{\@with}{\@without}}
\def\@with[#1]{\@ifundefined{c@#1}{%
            \ifnum\thecnum<1
            \stepcounter{cnum}
            \fi
           \newcounter{#1}
           \setcounter{#1}{\thecnum}
            C_{\thecnum}
           \stepcounter{cnum}
        }{
        \ifnum\the\csname c@#1\endcsname<\thecnum
         C_{{\the\csname c@#1\endcsname}}
        \else
         C_{{\the\csname c@#1\endcsname}}
         \ifnum\thecnum<1
         \stepcounter{cnum}
         \fi
        \stepcounter{cnum}
        \fi
        }}
\def\@without{\ifnum\thecnum<1
                \stepcounter{cnum}
              \fi
              C_{\thecnum}
              \stepcounter{cnum}
        }
\newtheorem{thm}{Theorem}[section]
\newtheorem{lemma}[thm]{Lemma}
\theoremstyle{definition}
\newtheorem{Def}[thm]{Definition}
\theoremstyle{remark}
\numberwithin{equation}{section}
\begin{document}

\title[Wave equation with nonlinear damping and super-cubic nonlinearity]{Long-term behavior for wave equation with nonlinear damping and super-cubic nonlinearity}

\author{Cuncai Liu}
\address{School of Mathematics and Physics, Jiangsu University of Technology, Changzhou, 213001,China}
\email{liucc@jsut.edu.cn}
\thanks{This work was supported by the NSFC(11701230, 11801227, 11801228, 12026431, 12326365, 12442050), Natural Science Fund For Colleges and
Universities in Jiangsu Province (22KJD110001), Jiangsu 333 Project, QingLan Project of Jiangsu Province and Zhongwu Young Innovative Talents Support Program.}

\author{Fengjuan Meng}
\address{School of Mathematics and Physics, Jiangsu University of Technology, Changzhou, 213001,China}
\email{fjmeng@jsut.edu.cn}
\author{Chang Zhang}
\address{School of Mathematics and Physics, Jiangsu University of Technology, Changzhou, 213001,China}

\email{chzhnju@126.com}
\subjclass[2020]{Primary: 35B40, 35L20, 37L30.}



\keywords{Damped wave equation; Super-cubic nonlinearity; Regular solution; Strichartz estimate; Global attractor.}

\begin{abstract}
  In this paper, we consider the semilinear wave equation involving the nonlinear damping term $g(u_t) $ and nonlinearity $f(u)$. The well-posedness of the weak solution satisfying some additional regularity is achieved under the wider ranges of the exponents $g$ and $f$. Moreover, the existence of global attractor and exponential attractor are proved.
\end{abstract}

\maketitle
\setcounter{tocdepth}{1}

\numberwithin{equation}{section}

\section{Introduction}
In this paper, we are concerned with the following nonlinear damped wave equation with periodic boundary conditions
\begin{align}\label{eq1}
  \begin{cases}
    u_{tt}+g(u_{t})-\Delta u+u+f(u)=\phi, ~(t,x)\in \mathbb{R}_{+}\times{\mathbb{T}^3} , \\
    u(0,x)=u_0,~~ u_{t}(0,x)=u_1,  x\in {\mathbb{T}^3},
  \end{cases}
\end{align}
where ${\mathbb{T}^3}=[0, 2\pi]^3$ is the torus in $\mathbb{R}^3$, the external force term $\phi\in L^2({\mathbb{T}^3})$ is independent of time.

We impose the following assumptions on the nonlinear damping $g$ and the nonlinearity $f$.

\textbf{Assumptions on nonlinear damping $g$}. Let $g\in C^1(\mathbb{R})$ satisfy $g(0)=0$ and
\begin{equation}\label{G1}
  c_1(|s|^{m}-c_2)\le |g(s)| \le c_3(|s|^{m}+1),~~ 1<m\le \frac{7}{5},\tag{G1}
\end{equation}
and
\begin{equation}\label{G2}
  \gamma\le g^\prime(s)\le c(1+g(s)s)^{2/3},\tag{G2}
\end{equation}
where $\gamma>0$ is a positive constant.

\textbf{Assumptions on nonlinearity $f$}. Let $f\in C^2(\mathbb{R})$, with $f(0)=f^{\prime}(0)=0$ and satisfy the following growth condition
\begin{equation}\label{F1}
  |f^{\prime\prime}(s)|\leq c(1+|s|^{p-2}),~~ 3\le p\le 7-2m,\tag{F1}
\end{equation}
along with the dissipative condition
\begin{equation}\label{F2}
  \liminf\limits_{|s|\rightarrow\infty}\frac{f(s)}{s}> 0.\tag{F2}
\end{equation}

Concerning the phase-spaces for our problem, we consider $\H=\H^1({\mathbb{T}^3})\times L^2({\mathbb{T}^3})$ and the fractional space $\H^\sigma=H^{\sigma+1}({\mathbb{T}^3})\times H^{\sigma}({\mathbb{T}^3})$.

The well-posedness and longtime behavior of problem \eqref{eq1} has been discussed widely. Lions and Strauss\cite{Lions65} established the existence of the weak solution in $\R^3$ for the case of $2\le p\le m$. Raugel\cite{rau92} established the global attractor subject to restriction $m_1\le g^\prime(s)\le c(1+|s|^{2/3})$ with $m_1$ sufficiently large. Feireisl\cite{feireisl95_nldamp} discussed the well-posedness of problem \eqref{eq1} when $p<6m/(m+1)$, $1\le m<5$, then proved the existence of the global attractor. Chueshov and Lasiecka\cite{chueshov04} considered the existence of global attractors and their properties (structure, dimension, etc.) in the case of $m<5$, $p<3$, with large damping parameter when $p=3$, they gave a general abstract framework for discussing the asymptotic behaviors of solutions for evolutionary equation with second-order time term, see \cite{chueshov08} for more details. Sun\cite{sun06} proved the existence of the global attractor of the weak solution in the case of $r=1$, $p\le 3$, $m<5$. Under the condition $p<3$, $r=m\le5$, Nakao\cite{nakao06} gave a polynomial absorbing rate of absorbing ball and the existence of the global attractor. Khanmamedov\cite{khanmamedov06} established the existence of the global attractor of the weak solution when $p\le 3$, $m\le 5$.

With regard to the higher growth nonlinearity, some more complicated estimates are required. Recently, Todorova\cite{todorova15} has established the $L^4(0,T;L^{12}(R^3))$ a priori space-time estimate by choosing a suitable nonlinear test function in the case of $m=\frac53$. Thereby the well-posedness of weak solution in $\R^3$ was obtained, they also proved the well-posedness of strong solution on $\R^3$ for $m\ge 2$. Inspired by Todorova\cite{todorova15}, the authors\cite{Liu23} proved a priori estimate and extended the results of both the well-posedness and global attractor upto the range $p\le \min\{3m,5\}$, $m\le 5$.

We summarize the known results through Figure 1 for clarity, the well-posedness of weak soluton, the existence and regularity of the global attracrtor are all solved in region I, II and III.
\begin{center}
  \begin{tikzpicture}
    [cube/.style={very thick,black},
      grid/.style={very thin,gray},
      axis/.style={->,black,thick}]

    \draw[axis] (0,0) -- (6,0) node[anchor=west]{$p$};
    \draw[axis] (0,0) -- (0,5.5) node[anchor=west]{$m$};
    \coordinate (A) at (1,1);

    \draw (1,0) -- (1, 0.1);
    \node[below] at (1,0){1};

    \draw (3,0) -- (3, 0.1);
    \node[below] at (3,0){3};

    \draw (5,0) -- (5, 0.1);
    \node[below] at (5,0){5};

    \draw (0,1) -- (0.1, 1);
    \node[left] at (0,1){1};

    \draw (0,5/3) -- (0.1,5/3);
    \node[left] at (0,5/3){5/3};

    \draw (0,5) -- (0.1, 5);
    \node[left] at (0,5){5};

    \draw[thick, fill=black!10] (1,5) -- (1,1) -- (3,1) -- (3,5);
    \fill[black!20, domain=3:5]  (3,5) -- (3,1) -- plot(\x, {\x/(6-\x)});

    \fill[black!30, domain=3:5]  plot(\x, {\x/(6-\x)}) -- (5,5/3)--(3,1);
    \draw[thick, fill=black!40,domain=4.21:4.95] (3,1) --(4.2, 1.4)-- plot(\x, 7/2-\x/2)--(3,1) ;
    \draw[thick] (1,1) -- (1,5);
    \draw[thick] (1,1) -- (5,1);
    \draw (4.5,1.23) -- (4.7, 0.68);

    \draw[thick, dashed] (5,1) -- (5,5/3);
    \draw[thick] (1,5) -- (5,5);

    \draw[thick] (3,1) -- (5,5/3) -- (5,5);
    \draw[thick] (3,1) -- (3,5);
    \draw[thick, domain=3:5]
    plot(\x, {\x/(6-\x)});
    \fill (5,5/3) circle (2pt);

    \node[font=\fontsize{8}{8}] at (2,2.5){I};
    \node[font=\fontsize{8}{8}] at (3.7,2.5){II};
    \node[font=\fontsize{8}{8}] at (4.7,2.5){III};
    \node[font=\fontsize{4}{4}] at (4.1,1.16){\small IV};
    \node[font=\fontsize{8}{8}] at (3.9,4){$p \le \frac{6m}{m+1}$};
    \node[font=\fontsize{8}{8}] at (4.5,2) {$p\le 3m$};

    \node[font=\fontsize{8}{8}] at (4.7,0.5) {$p\le {7-2m}$};
    \node at (3,-0.8) {Figure 1. Region of exponents for damping term and nonlinearity};
  \end{tikzpicture}
\end{center}

However, there still exists a gap. The a priori space-time estimate obtained in \cite{Liu23} is strongly depends on the lower growth condition of nonliear damping $g$ . However, when the growth exponent $m$ is near to $1$, the a priori space-time estimate will not be enough to deal with the higher growth of nonlinearity $f$. In this paper, we will fix the gap partially. In detail, in the case of $3\le p\le 7-2m$, we can prove the the weak solution is well-posed provided the solutoin has an extra  space-time regularity, then, the existence and regularity of the global attracrtor are established.

\medskip
Throughout the paper, the symbol $C$ stands for a generic constant indexed occasionally for the sake of clarity.

\section{Well-posedness}
\begin{Def}\emph{[Weak solution]
  A function $u$ satisfies
  $$u\in C([0,T];H^1({\mathbb{T}^3})),~~ u_t\in C([0,T];L^2({\mathbb{T}^3})),~~g(u_t)\in L^{\frac{1}{m}+1}([0,T]\times{\mathbb{T}^3})$$
  possessing the properties $u(0)=u_0$ and $u_t(0)=u_1$ is said to be a
  weak solution of problem \eqref{eq1} on $[0,T]\times{\mathbb{T}^3}$, if and only if equation \eqref{eq1} is satisfied in the sense of distribution, i.e.
  $$\int_{0}^{T}\int_{\mathbb{T}^3} \left[-u_t\psi_t+g(u_t)\psi+\nabla u\nabla \psi+u\psi+f(u)\psi\right]\td x \td t=\int_{0}^{T}\int_{\mathbb{T}^3} \phi\psi\td x\td t$$
  for any $\psi \in C_0^{\infty}((0,T)\times {\mathbb{T}^3})$.}
\end{Def}
When dealing with the Cauchy problem for nonlinear wave equations, the mixed norms $L^q_tL^r_x$ estimates called ``Strichartz estimates'' are particularly useful. The Strichartz estimates are well established on flat Euclidean space, see for example Keel and Tao \cite{keel98}, and references therein. For a compact manifold without boundary, as mentioned in \cite{sogge09}, finite speed of propagation shows that it suffices to work in coordinate charts, and to establish local Strichartz estimates for variable coefficient wave operators on Euclidean space.

\medskip
Regarding the solution $u$ of linear wave equation
\begin{eqnarray}\label{eq3.1}
  \begin{cases}
    u_{tt}-\Delta u=F(t), \text{in }\R_{+}\times{\mathbb{T}^3}, \\[3mm]
    u(0)=u_0,u_t(0)=u_1,
  \end{cases}
\end{eqnarray}
The Strichartz estimate can be stated as follows: in the following lemma, $(r',s')$ are the H\"older dual exponents to $(r,s)$.
\begin{lemma}[Strichartz estimate] \label{le2.1}
  Supposed $2<q\le \infty$, $2\le r<\infty$ and $(q,r,\sigma)$ is a triple satisfying
  \begin{equation}\label{eq2.2}
    \frac1q+\frac{3}{r}=\frac{3}{2}-\sigma \text{\quad and \quad} \frac2q+\frac{2}{r}\le 1
  \end{equation}
  and $(\tilde{q}',\tilde{r}',1-\sigma)$ satisfies the same conditions as $(q,r,\sigma)$. Then we have the following estimates for the solutions $u$ to \eqref{eq3.1} satisfying periodic boundary conditions,
  \begin{equation}\label{2.3}
    \|u\|_{L^q([-T,T];L^r({\mathbb{T}^3}))}\le C_T(\|u_0\|_{H^{\sigma}}+\|u_1\|_{H^{\sigma-1}}+\|F\|_{L^{\tilde{q}}([-T,T];L^{\tilde{r}}({\mathbb{T}^3}))}),
  \end{equation}
  where $C_T$ may depend on $T$.
\end{lemma}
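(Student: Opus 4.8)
The plan is to reduce \eqref{2.3} to the sharp flat-space Strichartz estimates of Keel and Tao \cite{keel98} by exploiting finite speed of propagation, following the scheme indicated in \cite{sogge09}. The first step is to separate the homogeneous and inhomogeneous contributions through the Duhamel representation
\[
u(t)=\cos(t\sqrt{-\Delta})\,u_0+\frac{\sin(t\sqrt{-\Delta})}{\sqrt{-\Delta}}\,u_1+\int_0^t\frac{\sin\big((t-s)\sqrt{-\Delta}\big)}{\sqrt{-\Delta}}\,F(s)\,\td s,
\]
where on the zero Fourier mode these operators degenerate to multiplication by $1$, $t$ and $(t-s)$; this finite-dimensional piece is smooth and contributes only a polynomial-in-$T$ factor absorbed into $C_T$. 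It therefore suffices to establish (i) the homogeneous bound $\|U(t)(u_0,u_1)\|_{L^q_tL^r_x}\lesssim \|u_0\|_{H^\sigma}+\|u_1\|_{H^{\sigma-1}}$ for the free wave propagator $U(t)$ on the orthogonal complement of the constants, and (ii) the corresponding retarded estimate for the Duhamel integral.

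For (ii), I would first establish the non-retarded inequality $\big\|\int_{\R} U(t-s)F(s)\,\td s\big\|_{L^q_tL^r_x}\lesssim \|F\|_{L^{\tilde q}_tL^{\tilde r}_x}$ through a $TT^*$/duality argument coupling the homogeneous estimate for $(q,r,\sigma)$ with the one for the dual admissible triple $(\tilde q',\tilde r',1-\sigma)$; the scaling identity and admissibility inequality in \eqref{eq2.2} are exactly what render both triples wave-admissible in dimension three. The passage to the retarded integral $\int_0^t$ is then furnished by the Christ--Kiselev lemma: since $\tilde q'>2$ by admissibility of the dual triple we have $\tilde q<2<q$, so the strict inequality $q>\tilde q$ required by that lemma holds, reflecting that we stay away from the forbidden $n=3$ endpoint $(q,r)=(2,\infty)$ that the hypothesis $2<q$ excludes.

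For (i), the key device is finite speed of propagation. I would fix a finite atlas of coordinate cubes covering $\mathbb{T}^3$ with a subordinate partition of unity $\{\chi_j\}$, localize the Cauchy data, and split $[-T,T]$ into $O(T)$ subintervals of length comparable to the localization scale. On each such short interval the locally evolved solution coincides, inside the chart, with the free Euclidean evolution of the extended localized data, so Keel--Tao on $\R^3$ applies chart by chart; summing over $j$ and over the time subintervals reassembles the estimate on $[-T,T]$ and yields the $T$-dependent constant. The main obstacle I anticipate is not the dispersive input---on the flat torus the local estimates are literally the Euclidean ones, with no genuine variable-coefficient parametrix required---but the bookkeeping of this localization: controlling the commutator errors $[\Delta,\chi_j]$ generated by cutting off in space, verifying that finite propagation speed is invoked only on time scales where the Euclidean comparison is exact, and tracking the zero mode so that the final constant depends on $T$ in a controlled, polynomial way.
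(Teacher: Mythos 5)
Your proposal is correct and matches the paper's approach: the paper gives no proof of this lemma, justifying it exactly as you do, by citing the flat-space estimates of Keel--Tao \cite{keel98} together with the finite-speed-of-propagation reduction to coordinate charts indicated in \cite{sogge09}. Your write-up (Duhamel with separate zero-mode treatment, chart-localized Keel--Tao for the homogeneous part, $TT^*$/duality with the dual admissible triple plus Christ--Kiselev for the retarded integral) is the standard implementation of precisely that argument.
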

In the natural phase space $\H$, i.e. in the case of $\sigma=1$, the only admissible pair for $(\tilde{q}, \tilde{r})$ is $\tilde{q}=1, \tilde{r}=2$. Since the nonlinear damping term $g(u_t)$ only belongs to $L^{\frac{m+1}{m}}([0,T]\times{\mathbb{T}^3})$, but not $L^2([0,T]\times{\mathbb{T}^3})$, we can not apply Strichartz estimates in the natural phase space $\H$ directly. To overcome this obstruction, we will apply Strichartz estimates in a lower regular phase space $\H^{\sigma-1}$ for some $\sigma<1$ such that $\tilde{r}=\frac{m+1}{m}$. In details, in the rest of the paper we will set parameters in Lemma \ref{le2.1} as follows:
\begin{equation}\label{index}
  \sigma=\frac{2}{m+1},  r=\frac{2(m+1)}{m-1}, q=m+1, \tilde{r}=\frac{m+1}{m} \text{~and~}  \tilde{q}=\frac{2(m+1)}{m+3}.
\end{equation}
It is easy to check that $(q,r,\sigma)$ and $(\tilde{q}',\tilde{r}',1-\sigma)$ both satisfy condition \eqref{eq2.2}.
\begin{Def}[Regular solution]
  \emph{A weak solution $u$ of problem \eqref{eq1} is said to be a regular solution if the following additional regularity holds: $u\in L^{q}(0,T;L^{r}({\mathbb{T}^3}))$.}
\end{Def}
\begin{thm}[Well-posedness of regular solution in region IV]\label{weak_4}
  Let $\phi\in L^2({\mathbb{T}^3})$ and conditions \eqref{G1}-\eqref{F2} hold. Then, for every $(u_0,u_1) \in \H$, there exists a regular solution $u(\cdot)$ of equation \eqref{eq1} and the following energy estimate holds
  \begin{equation}\label{energy_estimate2}
    \| (u,u_t)\|_{\H}+\| u\|_{L^{q}(t,t+1;L^{r}({\mathbb{T}^3}))}\le Q(\| (u_0,u_1)\|_{\H}+\|\phi\|),
  \end{equation}
  for any $t\ge 0$, where $Q$ is a continuous increasing function. Furthermore, the regular solution depends continuously on the initial data.
\end{thm}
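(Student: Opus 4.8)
The strategy is to construct the solution by a Galerkin scheme, to promote it to the regular class by feeding the energy bounds into the Strichartz estimate of Lemma \ref{le2.1}, and to prove uniqueness and continuous dependence by a difference estimate that uses both the strong monotonicity of $g$ and the extra integrability $u\in L^q(0,T;L^r)$. Concretely, I would first discretize \eqref{eq1} in the eigenbasis of $-\Delta$ on ${\mathbb{T}^3}$ and test the approximation $u_n$ with $\partial_t u_n$. Since \eqref{G2} forces $g$ to be increasing with $g(0)=0$, one has $g(s)s\ge0$, and the lower bound in \eqref{G1} gives $g(s)s\ge c_1|s|^{m+1}-C$; together with the coercivity of the potential energy coming from \eqref{F2}, the dissipative structure yields the global-in-time bound
\begin{equation*}
\sup_{t\ge 0}\|(u_n,\partial_t u_n)\|_{\H}^2+\int_{t}^{t+1}\!\!\int_{{\mathbb{T}^3}}|\partial_t u_n|^{m+1}\,\tdx\,\tdt\le Q\big(\|(u_0,u_1)\|_{\H}+\|\phi\|\big),
\end{equation*}
uniformly in $n$. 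By the upper bound in \eqref{G1}, $g(\partial_t u_n)$ is then bounded in $L^{(m+1)/m}$ in space--time; since $\tilde r=(m+1)/m$ and $\tilde q\le(m+1)/m$ on a unit interval, the damping term already lies in $L^{\tilde q}(t,t+1;L^{\tilde r})$.

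Next I would upgrade the regularity. Writing $\partial_{tt}u_n-\Delta u_n=\phi-u_n-g(\partial_t u_n)-f(u_n)$ and invoking Lemma \ref{le2.1} with the exponents \eqref{index}, the first three source terms are controlled in $L^{\tilde q}(t,t+1;L^{\tilde r})$ by the energy bound. The decisive term is $f(u_n)$: from \eqref{F1} one has $|f(s)|\le C(1+|s|^{p-1})$, so one must bound $\|u_n\|_{L^{(p-1)\tilde q}(t,t+1;L^{(p-1)\tilde r})}$. Here I would interpolate the spatial Lebesgue norm between $L^6$ (controlled by $H^1$ through the energy bound) and $L^r$ (the Strichartz norm), and verify that the resulting time exponent is integrable against the $L^q_t$ norm; a direct computation shows this is possible throughout region IV. Because the power of the Strichartz norm produced in this way exceeds one, the estimate does not close by absorption on a fixed interval; instead I would argue by continuation on a short interval $[0,\tau]$, on which $\|u_n\|_{L^q(0,\tau;L^r)}\to 0$ as $\tau\to 0$ makes the bootstrap self-improving, and then iterate. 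As the energy bound is global and uniform, $\tau$ can be taken uniform, which produces the uniform-in-$t$ control of $\|u_n\|_{L^q(t,t+1;L^r)}$ demanded in \eqref{energy_estimate2}.

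Passing to the limit $n\to\infty$ is routine except for the nonlinear terms. The Aubin--Lions lemma yields strong convergence of $u_n$, which, combined with the growth bound \eqref{F1} and the uniform $L^qL^r$ bound, identifies $f(u_n)\to f(u)$; the weak limit of $g(\partial_t u_n)$ is identified with $g(\partial_t u)$ through the monotonicity of $g$ by a Minty-type argument based on $g'\ge\gamma$. For uniqueness and continuous dependence I would take two regular solutions $u,v$, set $w=u-v$, and test the difference equation with $\partial_t w$. The monotonicity bound $(g(\partial_t u)-g(\partial_t v))\,\partial_t w\ge\gamma|\partial_t w|^2$ provides a coercive dissipation, while the nonlinearity is handled through $|f(u)-f(v)|\le C(1+|u|^{p-1}+|v|^{p-1})|w|$ and H\"older; the time-dependent coefficient $\|1+|u|^{p-1}+|v|^{p-1}\|_{L^3}$ is integrable on $(0,T)$ \emph{precisely} when $p\le 7-2m$, again by interpolating $L^{3(p-1)}$ between $L^6$ and $L^r$, after which Gronwall closes the estimate and gives both uniqueness and Lipschitz dependence on the data in $\H$.

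The main obstacle, present in each of these steps, is the super-cubic nonlinearity $f$: once $p>3$ it is supercritical for the bare $\H$ energy method, so neither the Strichartz forcing estimate nor the difference estimate closes at the energy level. The extra regularity $u\in L^q(0,T;L^r)$ is exactly what compensates, and the bound $p\le 7-2m$ is sharp in the sense that it is the threshold making the interpolation between the energy norm $L^\infty_tL^6_x$ and the Strichartz norm $L^q_tL^r_x$ time-integrable. I expect the most delicate point to be synchronizing the continuation argument for the Strichartz bound with the global energy dissipation, so that the interval length stays uniform and \eqref{energy_estimate2} holds for every $t\ge 0$.
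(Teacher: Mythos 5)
Your proposal is correct in substance and follows the same backbone as the paper's proof: the energy identity with the damping dissipation $\int\int g(u_t)u_t$, the lower-regularity Strichartz estimate of Lemma \ref{le2.1} with the exponents \eqref{index} (chosen exactly so that $\tilde r=(m+1)/m$ matches the integrability of $g(u_t)$), interpolation of the super-cubic term between $L^\infty_t L^6_x$ and $L^q_t L^r_x$, a continuity/bootstrap argument on intervals of length uniform in the starting time, and a Gronwall difference estimate for uniqueness within the regular class. The one structural difference is that the paper never applies Strichartz to $u$ itself: on each interval $[S,S+T]$ it splits $u=v+w$, where $w$ solves the free equation with data $(u(S),u_t(S))$ and is estimated by \emph{energy-level} Strichartz (yielding the bound \eqref{wp} for $\||w|^p\|_{L^{\tilde q}L^{\tilde r}}$), and only the zero-data part $v$ is run through the $\H^{\sigma-1}$-level estimate and the interpolation. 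Your direct application to $u$ also works, since the data enter at regularity $\sigma=2/(m+1)<1$ and are controlled by the energy; the price is a constant term $D$ in the bootstrap inequality $X(T)\le D+AT^aX(T)^{c}$, which your continuation argument absorbs just as the paper's barrier argument does using $X(0)=0$. So the splitting buys a cleaner inequality (no data term, and a stronger space-time norm for the free part) but is not essential.

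Two slips you should repair. First, \eqref{F1} together with $f(0)=f'(0)=0$ gives $|f(s)|\le C(1+|s|^{p})$, not $C(1+|s|^{p-1})$; the term to bound is therefore $\|u\|^p_{L^{p\tilde q}L^{p\tilde r}}$, and the H\"older-in-time step requires $\alpha p\tilde q<q$ with $\alpha=(\frac16-\frac1{p\tilde r})/(\frac16-\frac1r)$. You assert this computation rather than perform it, and it is the only place where the region-IV restriction enters the existence part; carried out, it gives the condition $p<6-m$, which follows from $p\le 7-2m$ because $m>1$. Second, your closing sharpness claim misplaces the threshold: $p\le 7-2m$ is \emph{not} the borderline for the existence interpolation (that is $p<6-m$); it is precisely the condition $\beta(p-1)=\frac{(p-3)(m+1)}{4-2m}\le q$ that makes the continuous-dependence coefficient $I(t)$ locally integrable, which you do state correctly in the uniqueness paragraph. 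Neither slip breaks the argument, and the rest (dissipation bound on $g(u_t)$ in $L^{\tilde q}L^{\tilde r}$, Minty-type identification of $g(u_t)$ in the limit, monotonicity plus Gronwall for continuous dependence) matches the paper.
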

\begin{proof}
  To prove the existence, we give below the formal derivation of the a priori estimate, which can be justified via Galerkin approximation. Multiplying \eqref{eq1} by $u_t(t)$ and integrating over ${\mathbb{T}^3}$, we get the energy estimate
  \begin{equation}\label{energy_est}
    E(u(t))+\int_0^t\int_{\mathbb{T}^3} g(u_t) u_t\td x\td t\le E(u(0)),
  \end{equation}
  where the energy functional
  $$E(u(t))=\frac12\|u_t\|^2+\frac12\|\nabla u\|^2+\frac12\|u\|^2+\int_{\mathbb{T}^3} F(u)\td x-\int_{\mathbb{T}^3} \phi u\td x.$$
  For any fixed $S\ge 0$, let us consider linear equation with initial data
  \begin{equation}\label{eqw}
    w_{tt}-\Delta w+w=0, \quad w(S)=u(S), w_t(S)=u_t(S).
  \end{equation}
  Then $v=u-w$ satisfies the following vanishing initial data problem
  \begin{equation}
    v_{tt}-\Delta v+v=-f(v+w)-g(u_t), \quad  v(S)=0,v_t(S)=0.
  \end{equation}
  Applying the Strichartz estimate in $\H^{\sigma-1}$ as parameters setting \eqref{index} to above equation, for any $T<1$, we have
  \begin{equation}\label{v}
    \| v\|_{L^{q}(S,S+T;L^{r}({\mathbb{T}^3}))}\le C\|f(v+w)\|_{L^{\tilde{q}}([S,S+T];L^{\tilde{r}}({\mathbb{T}^3}))}+C\|g(u_t)\|_{L^{\tilde{q}}([S,S+T];L^{\tilde{r}}({\mathbb{T}^3}))}.
  \end{equation}
  The second term will be dominated as follows
  \begin{equation}\label{gut}
    \begin{aligned}
      \|g(u_t)\|_{L^{\tilde{q}}([S,S+T];L^{\tilde{r}}({\mathbb{T}^3}))} & =\left(\int_S^{S+T} \|g(u_t)\|_{L^{(m+1)/m}({\mathbb{T}^3})}^{\tilde{q}}\text{d}t\right)^{1/\tilde{q}} \\[2mm]
                                                                      & \le CT^{\frac{3-m}{2+2m}}\|u_t\|^m_{L^{m+1}([S,S+T]\times{\mathbb{T}^3})}+CT^{\frac{m+3}{2(m+1)}}.
    \end{aligned}
  \end{equation}
  According to condition \eqref{F1}, for the first term on the right side in inequality \eqref{v}, we have
  \begin{equation}\label{vw}
    \begin{aligned}
          & \|f(v+w)\|_{L^{\tilde{q}}([S,S+T];L^{\tilde{r}}({\mathbb{T}^3}))}                                                                                \\[3mm]
      \le & C \|u^2+|v|^p+|w|^p\|_{L^{\tilde{q}}([S,S+T];L^{\tilde{r}}({\mathbb{T}^3}))}                                                                     \\[3mm]
      \le & CT^{\frac{4+2m}{3m+3}}\|u\|_{L^{\infty}([S,S+T];L^{6}({\mathbb{T}^3}))}^2+C\|\,|v|^p+|w|^p\|_{L^{\tilde{q}}([S,S+T];L^{\tilde{r}}({\mathbb{T}^3}))}.
    \end{aligned}
  \end{equation}
  By virtue of Strichartz estimate to equation \eqref{eqw} in $\H$, we can obtain
  \begin{equation}
    \|w\|_{L^{\frac{2p(m+1)}{p(m+1)-6m}}(S,S+T;L^{p\tilde{r}}({\mathbb{T}^3}))} \le C\left(E(u(S))\right)^{1/2},
  \end{equation}
  hence
  \begin{equation}\label{wp}
    \begin{aligned}
      \|\,|w|^p\|_{L^{\tilde{q}}([S,S+T];L^{\tilde{r}}({\mathbb{T}^3}))} & \le CT^{\frac{7m+3-p(m+1)}{2(m+1)}}\|w\|^{p}_{L^{\frac{2p(m+1)}{p(m+1)-6m}}(S,S+T;L^{p\tilde{r}}({\mathbb{T}^3}))} \\[3mm]
                                                                     & \le CT^{\frac{2m}{m+1}}\left(E(u(S))\right)^{p/2}.
    \end{aligned}
  \end{equation}
  Using interpolation of Lebesgue spaces, we have
  \begin{equation}\label{vp}
    \begin{aligned}
      \|\,|v|^p\|_{L^{\tilde{q}}([S,S+T];L^{\tilde{r}}({\mathbb{T}^3}))}
       & =\|v\|^p_{L^{p\tilde{q}}([S,S+T];L^{p\tilde{r}}({\mathbb{T}^3}))} \\[3mm]
       &\le  C \|v\|^{\al p}_{L^{\al p\tilde{q} }(S,S+T;L^{r}({\mathbb{T}^3}))}\|v\|^{(1-\al)p}_{L^{\infty}(S,S+T;L^{6}({\mathbb{T}^3}))} \\[3mm]
       & \le CT^{1-\frac{\al p\tilde{q}}{q}} \|v\|^{\al p}_{L^{q}(S,S+T;L^{r}({\mathbb{T}^3}))}\|v\|^{(1-\al)p}_{L^{\infty}(S,S+T;L^{6}({\mathbb{T}^3}))}.
    \end{aligned}
  \end{equation}
  where
  $$\al =(\frac16-\frac1{p\tilde{r}})/(\frac16-\frac1r).$$ Since $p\le 7-2m<6-m$, we have
  $$\al p \tilde{q}=(p-\frac6{\tilde{r}})\tilde{q}/(1-\frac6r) =\frac{p(m+1)-6m}{(2-m)(m+3)}(m+1)<m+1=q.$$

\medskip
  Substituting \eqref{gut}-\eqref{vp} into \eqref{v} and combining with the energy inequality \eqref{energy_est}, we have inequality
  \begin{equation}\label{xa}
    X(T)\le T^aAX^c(T)+BT^{b}\le AX^{c+1}(T)+AT^a+BT^{b}, \forall T\in [0, 1],
  \end{equation}
  where $X(t)=\|v\|_{L^{q}(S,S+t;L^{r}({\mathbb{T}^3}))}$ , $a=1-\frac{\al p\tilde{q}}{q}>0$, $b=\frac{2m}{m+1}>0$, $c=\al p>0$ and the constant $A$, $B$ only depend on $E(u(0))$.

\medskip
  Let $\psi(X)=X-A X^{c+1}$ and $X_0$ be the maximum point of $\psi(X)$. Since $c>0$, then $\psi(X_0)>0$. We choose $T_0>0$ such that $A T_0^a+B T_0^{b}=\frac12\psi(X_0)$, then we assert that
  \begin{equation}\label{b}
    X(t)\le X_0, \forall t\in [0, T_0].
  \end{equation}
  If not, there exists a time $t_1 \in [0, T_0]$ such that $X(t_1)>X_0$. Then, by continuity of $X(t)$, there exists a time $t_2\in [0, t_1)$ such that $X(t_2)=X_0$.
  However,
  $$X(t_2)-A X^{c+1}(t_2)=\psi(X(t_2))=\psi(X_0)> \frac12\psi(X_0)=A T_0^a+B T_0^{b}\ge A t_2^a+B t_2^{b},$$
  which contradicts with \eqref{xa}. Therefore, we can obtain that $\|v\|_{L^{q}(S,S+T_0;L^{r}({\mathbb{T}^3}))}$ can be dominated by $E(u(0))$ uniformly on $S\ge 0$.

\medskip
  To verify the continuous dependence, for any two regular solutions $u(t)$ and $v(t)$ with finite $L^{q}(0,T;L^{r}({\mathbb{T}^3}))$ norms, let the difference $\bar{u}=u-v$. Then $\bar{u}$ satisfies
  \begin{equation}\label{eqw2}
    \bar{u}_{tt}-\Delta \bar{u}+\bar{u}+g(u_t)-g(v_t)+f(u)-f(v)=0
  \end{equation}
  By the Mean Value Theorem, we have
  \begin{equation}\label{fuv}
    \begin{aligned}
      \|f(u)-f(v)\| & =\left\|\int_0^1f^{\prime}(\tau u+(1-\tau)v)\text{d}\tau \bar{u}\right\|\le C \|(1+|u|^{p-1}+|v|^{p-1}) \bar{u}\|                                                  \\[3mm]
                    & \le C(1+\|u\|^{p-1}_{L^{3p-3}({\mathbb{T}^3})}+\|v\|^{p-1}_{L^{3p-3}({\mathbb{T}^3})})\|\bar{u}\|_{L^{6}({\mathbb{T}^3})}=I(t)\|\bar{u}\|_{L^{6}({\mathbb{T}^3})}.
    \end{aligned}
  \end{equation}
  Since $3p-3\le 18-6m< r$, then by H\"older's inequality
  \begin{equation*}
    \begin{aligned}
      \|u\|^{p-1}_{L^{3p-3}({\mathbb{T}^3})}\le \|u\|^{\beta (p-1)}_{L^{r}({\mathbb{T}^3})}\|u\|^{(1-\beta)(p-1)}_{L^{6}({\mathbb{T}^3})},
    \end{aligned}
  \end{equation*}
  where
  $$\beta=(\frac16-\frac1{3p-3})/(\frac16-\frac1r).$$
  Notice that
  $$\beta (p-1)=\frac{(p-3)r}{r-6}=\frac{(p-3)(m+1)}{4-2m}\le q,$$
  thus $I(\cdot)\in L^1(0,T)$ and $f(u)-f(v)\in L^1(0,T;L^2({\mathbb{T}^3}))$. This fact allows us to multiply equation \eqref{eqw2} with $\bar{u}_t$, which implies that
 {\small  \begin{equation*}
    \frac{\text{d}}{\text{d}t}\left(\frac12\|\bar{u}_t\|^2+\frac12\|\nabla \bar{u}\|^2+\frac12\|\bar{u}\|^2\right)+\int_{\mathbb{T}^3} (g(u_t)-g(v_t))\bar{u}_t\text{d}x+\int_{\mathbb{T}^3} (f(u)-f(v))\bar{u}_t\text{d}x=0.
  \end{equation*}}Since $g(s)$ is increasing, thus
  $$\int_{\mathbb{T}^3} (g(u_t)-g(v_t))\bar{u}_t\text{d}x\ge 0.$$
  According to inequality \eqref{fuv},
  \begin{equation}
    \left|\int_{\mathbb{T}^3} (f(u)-f(v))\bar{u}_t\text{d}x\right| \le I(t)\|\bar{u}\|_{L^{6}({\mathbb{T}^3})}\|\bar{u}_t\|\le I(t)\left(\frac12\|\bar{u}_t\|^2+\frac12\|\nabla \bar{u}\|^2+\frac12\|\bar{u}\|^2 \right).
  \end{equation}
  Hence, the continuous dependence can be deduced by applying Gronwall's inequality on the following inequality
  $$\frac{\text{d}}{\text{d}t}\left(\frac12\|\bar{u}_t\|^2+\frac12\|\nabla \bar{u}\|^2+\frac12\|\bar{u}\|^2\right) \le I(t) \left(\frac12\|\bar{u}_t\|^2+\frac12\|\nabla \bar{u}\|^2+\frac12\|\bar{u}\|^2\right),$$ provided $I(t)\in L^1(0,T)$.
\end{proof}
\section{Global attractor}
Repeating arguments of the section 4 and 5 in \cite{Liu23} word by word, we can be obtain the global attractors associated with problem \eqref{eq1}.
\begin{thm}
  Let the assumptions of Theorem \ref{weak_4} hold. Then, the solution semigroup $S(t)$ generated by the regular solution of problem \eqref{eq1} possesses a global attractor $\mathscr{A}$ in the phase space $\H$. Moreover, the global attractor $\mathscr{A}$ is bounded in $\H^1$.
\end{thm}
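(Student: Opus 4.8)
The plan is to verify the three classical ingredients guaranteeing a global attractor for the semigroup $S(t)(u_0,u_1)=(u(t),u_t(t))$ on $\H$: that $S(t)$ is a continuous semigroup, that it admits a bounded absorbing set in $\H$, and that it is asymptotically compact. The first is already delivered by Theorem \ref{weak_4}, whose existence and continuous-dependence assertions make $S(t)$ well defined and continuous on $\H$. For the absorbing set I would exploit the energy functional $E$ together with the dissipation identity \eqref{energy_est}: condition \eqref{F2} forces $f(s)s\ge\de s^2-C$ for some $\de>0$, which both bounds $E$ coercively from below in terms of $\|(u,u_t)\|_\H^2$ and, upon testing \eqref{eq1} against the multiplier $u_t+\eps u$ for small $\eps>0$, produces a differential inequality $\tfrac{\td}{\td t}\Lm+\al\Lm\le C$ for a modified energy $\Lm\sim E$; Gronwall's lemma then yields a uniform absorbing ball $\mathcal B_0\subset\H$. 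The space-time bound in \eqref{energy_estimate2} upgrades this to uniform control of $\|u\|_{L^{q}(t,t+1;L^{r})}$ on $\mathcal B_0$, which is exactly the extra information needed to treat the super-cubic nonlinearity in the sequel.

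The heart of the matter, and the step I expect to be the main obstacle, is asymptotic compactness, since the wave semigroup enjoys no smoothing. Here I would use the energy (stabilizability) method. For two trajectories issuing from $\mathcal B_0$ put $\bar u=u^1-u^2$; as in the continuous-dependence argument of Theorem \ref{weak_4}, the monotonicity of $g$ gives $\int_{\mathbb{T}^3}(g(u_t^1)-g(u_t^2))\bar u_t\,\td x\ge0$, while the difference $f(u^1)-f(u^2)$ is controlled through the Strichartz-generated $L^{q}_tL^{r}_x$ regularity precisely as in \eqref{fuv}. Combining this with the multiplier identity I would derive a quasi-stability estimate of the form $\|(\bar u,\bar u_t)(t)\|_\H^2\le Ce^{-\al t}\|(\bar u,\bar u_t)(0)\|_\H^2+C\sup_{s\le t}\Phi(\bar u(s))$, where $\Phi$ is a seminorm compact relative to $\H$ (a lower-order $L^2$-type norm of $\bar u$). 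This inequality yields asymptotic compactness of $S(t)$ on $\mathcal B_0$, whence the global attractor $\mathscr A=\omega(\mathcal B_0)$ exists as the usual $\omega$-limit set of the absorbing ball.

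Finally, for the $\H^1$-boundedness I would use the invariance of $\mathscr A$ and its description as the set of bounded complete trajectories, together with a regularity bootstrap. Differentiating \eqref{eq1} in time and setting $\theta=u_t$ gives $\theta_{tt}+g'(u_t)\theta_t-\Delta\theta+\theta+f'(u)\theta=0$; the lower bound $g'\ge\ga>0$ from \eqref{G2} supplies genuine dissipation for $\theta$, so that testing with $\theta_t$—and absorbing the term $f'(u)\theta\cdot\theta_t$ via the uniform $\H$- and $L^{q}_tL^{r}_x$-bounds and \eqref{F1}—yields a uniform bound for $(u_t,u_{tt})$ in $H^1\times L^2$ along the trajectory. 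Rewriting the equation as $\Delta u=u_{tt}+g(u_t)+u+f(u)-\phi$ and placing the right-hand side in $L^2$ by means of \eqref{G1} and \eqref{F1} then gives $u\in H^2$, so that $\mathscr A$ is bounded in $\H^1=H^2\times H^1$. The delicate points are the control of the super-cubic term $f(u)$, where the Strichartz space-time regularity of the regular solution is indispensable and the upper bound in \eqref{G2} is used to tame $g'(u_t)$; these are exactly the features distinguishing region IV, and this is where the argument will demand the most care.
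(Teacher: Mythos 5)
The paper offers no self-contained proof of this theorem --- it simply invokes Sections 4 and 5 of \cite{Liu23} verbatim --- so an independent argument like yours is welcome, and its first two ingredients (continuity of $S(t)$ from Theorem \ref{weak_4}; an absorbing set from \eqref{F2}, \eqref{energy_est} and the multiplier $u_t+\eps u$) are standard. The first genuine gap is in your asymptotic compactness step. A quasi-stability estimate whose remainder is a seminorm \emph{compact} relative to $\H$ is only available when $p<7-2m$ \emph{strictly}; in that range your plan is essentially the paper's own Theorem \ref{qs}. The obstruction at the endpoint is concrete: \eqref{fuv} controls $f(u)-f(v)$ only by $I(t)\|\bar u\|_{L^6}$, and $H^1(\mathbb{T}^3)\cto L^6(\mathbb{T}^3)$ is the critical, non-compact embedding, so $\sup_{s\le t}\|\bar u(s)\|_{L^6}$ is \emph{not} a compact seminorm. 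To lower $L^6$ to $L^{6/(1+2\delta)}$ (equivalently $H^{1-\delta}$) with $\delta>0$, the other factor must be measured in $L^{3(p-1)/(1-\delta)}$, and time-integrability of the resulting coefficient via interpolation against the Strichartz bound $L^q_tL^r_x$ forces $\frac{(p-3+2\delta)r}{r-6}\le q$, i.e. $p+2\delta\le 7-2m$. This is precisely why the paper's Theorem \ref{qs} takes $\delta=\min\{(7-2m-p)/2,1/8\}$ and restricts its exponential attractor to $p<7-2m$. Since the theorem you are proving allows $p=7-2m$, your route fails exactly there; the endpoint requires a compactness argument with no compact remainder, e.g. the energy/contractive-function method in the spirit of \cite{sun06,khanmamedov06}, which is what the deferred argument of \cite{Liu23} relies on to reach critical exponents.

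The second gap concerns the $\H^1$-bound. Differentiating the equation in time and testing the equation for $\theta=u_t$ with $\theta_t$ (or $\theta_t+\eps\theta$) presupposes $(u_t,u_{tt})(s)\in H^1\times L^2$ at some starting time $s$, and on the attractor this is not known at \emph{any} time: a priori $u_{tt}$ lives only at the level $H^{\delta-1}$ (cf. Lemma \ref{le4.9}), so the Gronwall argument has nowhere to start. This is not a removable formality: the reason the paper's own smoothing estimate (Lemma \ref{lem5.5}) closes is that the smooth component $w$ starts from \emph{zero} initial data, the rough data having been shipped into the exponentially decaying part $v$; your time-differentiated equation has no analogous mechanism, and the standard repair is exactly that splitting $S(t)=S_v(t)+S_w(t)$ combined with invariance $\mathscr{A}=S(t)\mathscr{A}$, iterated in fractional steps to climb from $\H^{\delta}$ up to $\H^1$. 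Moreover, even granting uniform bounds $u_t\in H^1$, $u_{tt}\in L^2$, your elliptic step is wrong as stated: $u\in H^1\subset L^6$ gives only $f(u)\in L^{6/p}$, which is not $L^2$ once $p>3$; one must bootstrap through $u\in W^{2,6/p}\subset L^{a_1}$, $f(u)\in L^{a_1/p}$, and so on, an iteration that terminates only because $p\le 7-2m<5$. A smaller but real omission of the same kind: with the multiplier $\bar u_t+\al\bar u$, the damping contributes the non-sign-definite term $\al\int_{\mathbb{T}^3}\bigl(g(u^1_t)-g(u^2_t)\bigr)\bar u\,\td x$, which monotonicity of $g$ alone cannot handle --- the upper bound in \eqref{G2} is essential there, as in \eqref{eq4.7}.
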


\section{Exponential attractor}
In this section, we will prove the existence of exponential attractor provided the nonlinearity $f$ satisfies
\begin{equation}\label{F3}
  |f''(s)|\leq c_2(1+|s|^{p-2}),\tag{F3}
\end{equation}
where $3\leq p<7-2m$.

\medskip
The main result of this section is the following theorem.
\begin{thm}\label{th4.1}
  Let $\phi\in L^2({\mathbb{T}^3})$ and conditions \eqref{G1}, \eqref{G2}, \eqref{F2} and \eqref{F3} hold. Then the solution semigroup $S(t)$
  possesses an exponential attractor $\mathscr{A}_{\text{exp}}$, which is compact in $\H$ and satisfies the following properties:

  (i) $\mathscr{A}_{\text{exp}}$ is positive invariant, i.e., $S(t)\mathscr{A}_{\text{exp}}\subset \mathscr{A}_{\text{exp}}$ for all $t\geq0$;

  (ii) $dim_F \mathscr{A}_{\text{exp}}<\infty $, i.e., $\mathscr{A}_{\text{exp}}$ has finite fractal dimension in $\mathcal{H}$;

  (iii) there exists a constant $\mu>0$ such that, for any bounded set $B\subset {\H}$,
  $$ dist_{\H}(S(t)B, \mathscr{A}_{\text{exp}})\le Q(\|B\|_{\H})e^{-\mu t}\quad \text{for all } t\ge 0,$$
  where $dist_{\H}(B,A)= \sup_{x\in B}\inf_{y\in A} \|x-y\|_{\H}$ is the Hausdorff semi-distance.
\end{thm}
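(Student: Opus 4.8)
The plan is to deduce Theorem \ref{th4.1} from the abstract theory of exponential attractors for quasi-stable dynamical systems of Chueshov and Lasiecka \cite{chueshov08}, following the template of \cite{Liu23} but exploiting the \emph{strict} inequality $p<7-2m$ in \eqref{F3}. The two ingredients to verify are a quasi-stability inequality on a bounded absorbing set and the Hölder-in-time continuity of the flow; both hypotheses then feed directly into the abstract construction.

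First I would fix a bounded absorbing set $B\subset\H$ for $S(t)$, whose existence follows from the energy estimate \eqref{energy_estimate2} together with the dissipativity \eqref{F2}, exactly as in the global-attractor argument. Restricting to trajectories starting in $B$, take two regular solutions $u_1,u_2$ and set $w=u_1-u_2$, which solves
\[
  w_{tt}-\Delta w+w+\big(g(\partial_t u_1)-g(\partial_t u_2)\big)+\big(f(u_1)-f(u_2)\big)=0,\qquad (w,w_t)|_{t=0}=z_1-z_2.
\]
Multiplying by $w_t+\eps w$ and integrating over ${\mathbb{T}^3}$, the leading dissipation comes from the damping: by the Mean Value Theorem and the lower bound $g'(s)\ge\gamma>0$ in \eqref{G2},
\[
  \int_{\mathbb{T}^3}\big(g(\partial_t u_1)-g(\partial_t u_2)\big)\,w_t\,\tdx\ge\gamma\|w_t\|^2,
\]
so that, after choosing $\eps>0$ small, the $\eps$-shifted energy $\tfrac12\|w_t\|^2+\tfrac12\|\nabla w\|^2+\tfrac12\|w\|^2+\eps\int w\,w_t\,\tdx$ is coercive and satisfies a differential inequality with exponentially decaying homogeneous part, modulo the contribution of $f$.

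The decisive step is to show that $f(u_1)-f(u_2)$ is a genuinely lower-order (compact) perturbation. Writing $f(u_1)-f(u_2)=\big(\int_0^1 f'(\tau u_1+(1-\tau)u_2)\,{\rm d}\tau\big)\,w$ and invoking the growth bound \eqref{F3}, I would interpolate the resulting product between the Strichartz norm $L^q(t,t+1;L^r({\mathbb{T}^3}))$, which is finite and uniformly bounded on $B$ by Theorem \ref{weak_4}, and a lower-order norm. The strictness $p<7-2m$ leaves a positive exponent to spare in the interpolation, so the term splits into a piece absorbable by the $\gamma$-dissipation and a piece controlled by a compact seminorm $n_X(w(s))=\sup_{s}\|u_1(s)-u_2(s)\|_{L^2}$ (or a suitable lower-regularity norm). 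Integrating the differential inequality then produces the quasi-stability estimate
\[
  \|S(t)z_1-S(t)z_2\|_{\H}^2\le C e^{-\omega t}\|z_1-z_2\|_{\H}^2+C\sup_{0\le s\le t}\|u_1(s)-u_2(s)\|_{*}^2,
\]
with $\|\cdot\|_*$ a seminorm compact relative to the embedding $\H^1\subset\H$ (recall the global attractor is bounded in $\H^1$). The remaining hypothesis, Lipschitz continuity of $t\mapsto S(t)z$ in $\H$ on bounded intervals, is immediate from \eqref{energy_estimate2} and the continuous-dependence bound already proved in Theorem \ref{weak_4}. With quasi-stability and time-continuity in hand, the abstract theorem of \cite{chueshov08} yields $\mathscr{A}_{\text{exp}}$ together with the positive invariance (i), finite fractal dimension (ii), and exponential attraction (iii).

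I expect the main obstacle to be the third step: converting the supercritical difference $f(u_1)-f(u_2)$ into a truly lower-order term. Since $f'$ may grow like $|s|^{p-1}$ with $p$ arbitrarily close to $7-2m$, a naive Hölder--Sobolev estimate lands exactly at the critical exponent and fails to deliver compactness. The strict inequality must therefore be used quantitatively, distributing the factor $w$ between the finite Strichartz regularity and the compact lower-order seminorm so that the critical-looking term becomes an exponentially small plus compact contribution; this is precisely why \eqref{F3} is imposed with $p<7-2m$ rather than the endpoint $p\le 7-2m$ used for the global attractor.
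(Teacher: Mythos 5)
Your quasi-stability step is essentially the paper's own Theorem \ref{qs}: the same multiplier $\bar u_t+\alpha\bar u$, the lower bound $g'\ge\gamma$ from \eqref{G2} to make the damping difference coercive, and the strictness $p<7-2m$ used quantitatively (through $\delta=\min\{(7-2m-p)/2,1/8\}>0$) to interpolate the nonlinear term between the Strichartz norm $L^q(t,t+1;L^r)$ guaranteed by Theorem \ref{weak_4} and a compact lower-order seminorm $\sup_s\|\bar u(s)\|_{H^{1-\delta}}$. That half of the plan is sound and matches the paper.

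The genuine gap is your treatment of the time-regularity hypothesis. You assert that the remaining assumption of the abstract result (Theorem \ref{th4.3}) is ``Lipschitz continuity of $t\mapsto S(t)z$ in $\H$ on bounded intervals'' and that it is ``immediate from \eqref{energy_estimate2} and the continuous-dependence bound.'' This conflates two distinct hypotheses. Continuous dependence gives Lipschitz continuity in the \emph{initial data}, which is indeed one requirement of Theorem \ref{th4.3}; but the theorem separately requires H\"older continuity \emph{in time}, $\|S(t_1)z-S(t_2)z\|_{\tilde X}\le C|t_1-t_2|^{\theta}$, and the exponential attractor it produces has finite fractal dimension only in the space $\tilde X$ where this continuity holds. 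At the $\H$ energy level this is simply not available: H\"older continuity of $t\mapsto u_t(t)$ in $L^2$ would need control of $u_{tt}$ near $L^2$, whereas the equation $u_{tt}=\phi-g(u_t)+\Delta u-u-f(u)$ only places $u_{tt}$ in negative-order spaces ($g(u_t)\in L^{(m+1)/m}$, $\Delta u\in H^{-1}$); likewise $\|u(t_1)-u(t_2)\|_{H^1}\le\int_{t_1}^{t_2}\|u_t\|_{H^1}\,ds$ fails because $u_t$ is only in $L^2$, and the qualitative continuity $u\in C([0,T];H^1)$, $u_t\in C([0,T];L^2)$ carries no H\"older exponent. Retreating to a weaker $\tilde X$ (say $L^2\times H^{-1}$), where Lipschitz-in-time does hold, would surrender property (ii): the dimension would be finite only in $\tilde X$, not in $\H$. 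This is exactly why the paper devotes the bulk of Section 4 to asymptotic regularity: it decomposes $S(t)=S_v(t)+S_w(t)$ with $\|S_v(t)z_0\|_{\H}\le Ce^{-\rho t}$ (Lemma \ref{lem5.6}) and $S_w(t)z_0$ uniformly bounded in $\H^{\delta}$ (Lemma \ref{lem5.5}), builds from these a positively invariant, exponentially attracting set $\mathscr{B}_1$ bounded in $\H^{\delta}$ (Lemma \ref{le6.7}), shows $\frac{d}{dt}S(t)z=(u_t,u_{tt})$ is bounded in $\H^{\delta-1}$ on $\mathscr{B}_1$ (Lemma \ref{le4.9}), and only then gets H\"older continuity of order $\delta$ in $\H$ itself by interpolating $\H^{\delta-1}$ against $\H^{\delta}$ (Lemma \ref{le6.10}). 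The abstract theorem is then applied to $(S(t),\mathscr{B}_1)$; and since $\mathscr{B}_1$ is attracting rather than absorbing, the exponential attraction of arbitrary bounded sets in (iii) requires the transitivity of exponential attraction \cite{fab04} --- a closing step your outline also omits. Without this regularity machinery the argument cannot be completed.
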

There are only a few results concerned on exponential attractor of wave equation with nonliear damping, Pra\v{z}\'ak\cite{prazak02,prazak03} obtained the exponential attractor under condition $m<7/3$ and $p<3$.
\subsection{The quasi-stablity and exponential attractor}
Our construction of exponential attractor depends on the quasi-stablity of semigroup.

\begin{Def}[\cite{chueshov15}, Definition 3.4.1]
  \emph{Let $(X, S(t))$ be a dynamical system in some Banach space $X$. This system is said to be quasi-stable on a set $\mathscr{B}\subset X$ (at time $t_{*}$) if there exist (a) time
  $t_{*} > 0$, (b) a Banach space $Z$, (c) a globally Lipschitz mapping $K:\mathscr{B}\to Z$, and (d) a compact seminorm $n_{Z}(\cdot)$ on the space $Z$, such that
  \begin{equation}
    \|S(t_{*})y_1-S(t_{*})y_2\|_{X}\le  q\|y_1-y_2\|_X+n_{Z}(y_1-y_2)
  \end{equation}
  for every $y_1$, $y_2\in \mathscr{B}$ with $0\le q < 1$.}
\end{Def}

\begin{thm}[\cite{chueshov15}, Theorem 3.4.7]\label{th4.3}
  Assume that a dynamical system $(X, S(t))$ is dissipative and quasi-stable on some bounded absorbing set $\mathscr{B}$ at some moment $t_*> 0$. We also assume that
  \begin{equation}
    \|S(t_{*})y_1-S(t_{*})y_2\|_{X}\le  C_{\mathscr{B}}\|y_1-y_2\|_X\text{~~for every~~} y_1, y_2\in \mathscr{B} \text{~~and~~} t\in [0, t_*]
  \end{equation}
  and there exists a space $\tilde{X}\supset X$ such that $t\mapsto S(t)y$ is H\"{o}lder continuous in $\tilde{X}$ for every $y\in \mathscr{B}$ in the sense that there exist $0<\theta\le 1$ and $C_{\mathscr{B},t_*}> 0$ such that
  \begin{equation}
    \|S(t_1)y-S(t_2)y\|_{\tilde{X}}\leq C_{\mathscr{B},t_*}|t_1-t_2|^\theta, ~~t_1, t_2\in [0,t_*], y\in \mathscr{B}.
  \end{equation}
  Then the dynamical system $(X, S(t))$ possesses a (generalized) fractal exponential attractor whose dimension is finite in the space $\tilde{X}$.
\end{thm}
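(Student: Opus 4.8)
The plan is to reduce the continuous-time statement to a discrete one and then lift back. Set $V:=S(t_*)$. By dissipativity the absorbing set $\mathscr{B}$ may be assumed positively invariant under $V$ (replace it by the closure of $\bigcup_{n\ge0}V^n\mathscr{B}$, which stays bounded). The core is to construct a \emph{discrete} exponential attractor $M_*\subset\mathscr{B}$ for $V$: a compact, positively invariant set with $\dim_F^X M_*<\infty$ and $\mathrm{dist}_X(V^n\mathscr{B},M_*)\le C\kappa^{\,n}$ for some $\kappa<1$. The continuous-time attractor is then obtained by fibering $M_*$ over $[0,t_*]$.

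Before the construction I would iterate the quasi-stability estimate to sharpen the contraction. Applying it along the orbit and using that each $V^j$ is Lipschitz on $\mathscr{B}$ (the $C_{\mathscr{B}}$-bound at multiples of $t_*$), one gets for every $N$ a bound $\|V^Ny_1-V^Ny_2\|_X\le q^N\|y_1-y_2\|_X+\widetilde n_N(y_1,y_2)$, where $\widetilde n_N=\sum_{j<N}q^{N-1-j}n_Z\big(K(V^jy_1)-K(V^jy_2)\big)$ is a finite sum of compositions of $n_Z$ with Lipschitz maps, hence again a precompact remainder. Choosing $N$ with $q^N<\tfrac12$ and relabelling $V^N$ as $V$ and $Nt_*$ as $t_*$ (the Lipschitz and H\"older interval hypotheses persisting on the enlarged interval by composition), we may assume $q<\tfrac12$.

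The heart of the matter is a covering/entropy argument exploiting the compact seminorm. Fix a finite $\epsilon_0$-net of $\mathscr{B}$ in $X$. Given any $X$-ball $B(x,\epsilon)$ meeting $\mathscr{B}$, the Lipschitz map $K$ (with constant $L$) sends $B(x,\epsilon)\cap\mathscr{B}$ into a $Z$-ball of radius $\le 2L\epsilon$; compactness of $n_Z$ covers this $Z$-ball by a fixed finite number $P$ of sets of $n_Z$-diameter $<(\kappa-2q)\epsilon$, with $\kappa\in(2q,1)$ fixed and $P$ \emph{independent of} $\epsilon$ by homogeneity of the seminorm. Choosing one representative per piece and invoking the quasi-stability estimate shows $V\big(B(x,\epsilon)\cap\mathscr{B}\big)$ is covered by $P$ balls of radius $\le 2q\epsilon+(\kappa-2q)\epsilon=\kappa\epsilon$. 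Iterating this refinement yields coverings of $V^n\mathscr{B}$ by at most $N_0P^n$ balls of radius $\kappa^n\epsilon_0$, giving simultaneously the exponential attraction rate $\kappa$ and the entropy bound $\dim_F^X M_*\le \log P/\log(1/\kappa)<\infty$. Assembling the net centers as in the Efendiev--Miranville--Zelik / Chueshov--Lasiecka construction produces the positively invariant, finite-dimensional discrete attractor $M_*$. This bookkeeping---keeping $P$ uniform in $\epsilon$ and turning the covering chain into a dimension bound---is the main obstacle; the rest is soft.

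Finally I would lift to continuous time via $\mathscr{A}_{\text{exp}}:=\bigcup_{s\in[0,t_*]}S(s)M_*$. Positive invariance $S(t)\mathscr{A}_{\text{exp}}\subset\mathscr{A}_{\text{exp}}$ follows by writing $t=nt_*+s$ and using $S(t_*)M_*\subset M_*$. For exponential attraction, given bounded $B$ and $t=nt_*+s$ with $s\in[0,t_*)$, the $C_{\mathscr{B}}$-bound on $[0,t_*]$ converts the discrete rate into $\mathrm{dist}_X(S(t)B,\mathscr{A}_{\text{exp}})\le C_{\mathscr{B}}\,C\kappa^n\le Q(\|B\|_X)e^{-\mu t}$ with $\mu=-t_*^{-1}\log\kappa>0$. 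For the dimension---measured in $\tilde X$ as stated---consider $\Phi:[0,t_*]\times M_*\to\tilde X$, $\Phi(s,m)=S(s)m$: it is Lipschitz in $m$ (by the $[0,t_*]$ Lipschitz bound and $X\hookrightarrow\tilde X$) and $\theta$-H\"older in $s$ (by hypothesis), hence H\"older on the product, so $\mathscr{A}_{\text{exp}}=\Phi([0,t_*]\times M_*)$ is a H\"older image of a finite-dimensional set and $\dim_F^{\tilde X}\mathscr{A}_{\text{exp}}\le\theta^{-1}\big(\dim_F^X M_*+1\big)<\infty$. Compactness in $X$ is immediate since $M_*$ is compact and $s\mapsto S(s)m$ is continuous, which completes the proof.
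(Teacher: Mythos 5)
This theorem is quoted verbatim from \cite{chueshov15} (Theorem 3.4.7) and the paper supplies no proof of it, so the comparison is with the cited source: your argument is essentially a faithful reconstruction of that proof --- pass to the discrete map $V=S(t_*)$, iterate quasi-stability to reach $q<\tfrac12$ with the accumulated remainder $\widetilde n_N$ still a compact-seminorm term (since each $K\circ V^j$ is Lipschitz), run the scale-invariant covering argument (with $P$ independent of $\epsilon$ by homogeneity of $n_Z$) to build a discrete exponential attractor $M_*$ with $\dim_F^X M_*\le \log P/\log(1/\kappa)$, and fiber over time via $\mathscr{A}_{\text{exp}}=\bigcup_{s\in[0,t_*]}S(s)M_*$, estimating the dimension in $\tilde X$ through the jointly H\"older map $(s,m)\mapsto S(s)m$, which gives exactly the conclusion as stated. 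Two minor repairs, neither fatal: positive invariance should be arranged with $\bigcup_{n\ge n_0}V^n\mathscr{B}$ for $n_0$ the discrete absorption time (quasi-stability is hypothesized only on $\mathscr{B}$, and your union from $n=0$ may leave $\mathscr{B}$, whereas the tail union stays inside it and quasi-stability passes to its closure by continuity), and your closing claim that $\mathscr{A}_{\text{exp}}$ is compact in $X$ silently uses continuity of $t\mapsto S(t)m$ in $X$, which is not among the displayed hypotheses (only H\"older continuity in $\tilde X\supset X$ is assumed --- this is precisely why the conclusion is a \emph{generalized} exponential attractor with dimension, and in general compactness, measured in $\tilde X$; in the paper's application the $X$-continuity does hold, so no harm results there).
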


\subsection{Quasi-stablity}
\begin{thm}[Quasi-stablity]\label{qs}
  Let all assumptions of Theorem \ref{th4.1} hold. Then for any regular solutions $u(t)$ and $v(t)$ of equation \eqref{eq1}, the following quasi-stable inequality holds
  \begin{equation}
    \begin{aligned}
      \|\bar{u}_t(t)\|+\|\nabla \bar{u}(t)\|+\|\bar{u}(t)\|\le & Ce^{-\mu t}(\|\bar{u}_t(0)\|+\|\nabla \bar{u}(0)\|+\|\bar{u}(0)\|) \\
                                                               & +C \sup_{s\in[0,t]}\|\bar{u}(s)\|_{H^{1-\delta}({\mathbb{T}^3})}.
    \end{aligned}
  \end{equation}
  for any $t\ge 0$, where $\bar{u}=u-v$, $\mu>0$, $\delta=\min\{(7-2m-p)/2,1/8\}$ and the constants $C$ depends on $\mathscr{B}$ olny.
\end{thm}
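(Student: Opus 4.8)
The plan is to test the difference equation \eqref{eqw2} for $\bar u=u-v$ against $\bar u_t$ and against a small multiple of $\bar u$, and to absorb the super-cubic nonlinearity into the compact seminorm $\|\bar u\|_{H^{1-\delta}}$ by exploiting the Strichartz regularity $u,v\in L^q(0,T;L^r({\mathbb{T}^3}))$ from Theorem \ref{weak_4}. Concretely, I introduce the perturbed energy
\begin{equation*}
  \Lambda(t)=\bar E(t)+\eps\int_{\mathbb{T}^3}\bar u_t\,\bar u\,\tdx,\qquad \bar E(t)=\tfrac12\|\bar u_t\|^2+\tfrac12\|\nabla\bar u\|^2+\tfrac12\|\bar u\|^2,
\end{equation*}
with $\eps>0$ so small that $\tfrac12\bar E\le\Lambda\le 2\bar E$. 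Differentiating and using the equation, the monotonicity of $g$ together with the lower bound $g'\ge\gamma$ in \eqref{G2} gives $\int_{\mathbb{T}^3}(g(u_t)-g(v_t))\bar u_t\,\tdx\ge\gamma\|\bar u_t\|^2$, while the multiplier $\eps\bar u$ produces the coercive terms $-\eps\|\nabla\bar u\|^2-\eps\|\bar u\|^2+\eps\|\bar u_t\|^2$. Choosing $\eps,\mu$ small, the quadratic part becomes negative definite and I am left with
\begin{equation*}
  \tfrac{\td}{\td t}\Lambda+\mu\Lambda\le \Big|\int_{\mathbb{T}^3}(f(u)-f(v))\bar u_t\,\tdx\Big|+\eps\Big|\int_{\mathbb{T}^3}(g(u_t)-g(v_t))\bar u\,\tdx\Big|+\eps\Big|\int_{\mathbb{T}^3}(f(u)-f(v))\bar u\,\tdx\Big|.
\end{equation*}

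The decisive step is to refine \eqref{fuv} so that $\bar u$ appears in a sub-critical norm. By the mean value theorem and \eqref{F3}, $|f(u)-f(v)|\le C(1+|u|^{p-1}+|v|^{p-1})|\bar u|$; splitting by H\"older with $\bar u$ placed in $L^b$, $b=6/(1+2\delta)$ (so that $H^{1-\delta}\cto L^b$) and the remaining factor in $L^a$, $\tfrac1a=\tfrac{1-\delta}{3}$, yields
\begin{equation*}
  \|f(u)-f(v)\|\le C\big(1+\|u\|^{p-1}_{L^{a(p-1)}}+\|v\|^{p-1}_{L^{a(p-1)}}\big)\|\bar u\|_{H^{1-\delta}}=:\tilde I(t)\,\|\bar u\|_{H^{1-\delta}}.
\end{equation*}
Interpolating $L^{a(p-1)}$ between $L^6$ and the Strichartz space $L^r$ and controlling $\|u\|_{L^6}$ by the energy, one obtains $\|u\|_{L^{a(p-1)}}^{p-1}\le C\|u\|_{L^r}^{\beta(p-1)}$ with $\beta(p-1)\le q$; this last inequality is precisely what the choice $\delta=\min\{(7-2m-p)/2,1/8\}$ guarantees (at $p=7-2m$, $\delta=0$ it degenerates to the equality $a(p-1)=3p-3$, $\beta(p-1)=q$ of the well-posedness proof, and $p<7-2m$ gives the strict room $\delta>0$). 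Consequently $\tilde I\in L^1_{\mathrm{unif}}(\R_+)$, because $\|u\|_{L^r}\in L^q$. Since $\|\bar u_t\|,\|\bar u\|\le C\Lambda^{1/2}$, both $f$-terms are then bounded by $C\tilde I(t)\|\bar u\|_{H^{1-\delta}}\Lambda^{1/2}$.

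For the damping cross term I use $g(u_t)-g(v_t)=g'(\xi)\bar u_t$ and the Cauchy--Schwarz splitting $\eps\int g'(\xi)\bar u_t\bar u\le\tfrac\eps2\int g'(\xi)\bar u_t^2+\tfrac\eps2\int g'(\xi)\bar u^2$; the first piece equals $\tfrac\eps2\int(g(u_t)-g(v_t))\bar u_t$ and is reabsorbed into the damping dissipation, while \eqref{G2} gives $g'(\xi)\le c(1+|u_t|^{2(m+1)/3}+|v_t|^{2(m+1)/3})$, whence by H\"older $\int g'(\xi)\bar u^2\le CM(t)\|\bar u\|^2_{H^1}\le CM(t)\Lambda$ with $M(t)=1+\|u_t\|_{L^{m+1}}^{2(m+1)/3}+\|v_t\|_{L^{m+1}}^{2(m+1)/3}\in L^{3/2}(\R_+)$ thanks to the dissipation integral in \eqref{energy_est}. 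Collecting everything I reach
\begin{equation*}
  \tfrac{\td}{\td t}\Lambda+\mu\Lambda\le \eps C M(t)\Lambda+C\tilde I(t)\,\|\bar u\|_{H^{1-\delta}}\,\Lambda^{1/2}.
\end{equation*}
Setting $Y=\Lambda^{1/2}$ linearizes this to $Y'+\tfrac12(\mu-\eps CM)Y\le \tfrac C2\tilde I\|\bar u\|_{H^{1-\delta}}$, and since $\int_s^t M\le C(t-s)^{1/3}$ the integrating factor still decays exponentially (after shrinking $\mu$), while the uniform local integrability of $\tilde I$ makes $\int_0^t e^{-\mu(t-s)/2}\tilde I(s)\,\td s$ bounded uniformly in $t$. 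This yields $Y(t)\le Ce^{-\mu t}Y(0)+C\sup_{[0,t]}\|\bar u\|_{H^{1-\delta}}$, which is the asserted inequality. The heart of the argument---and its only genuinely hard point---is the nonlinear estimate of the second paragraph: controlling the super-cubic term by the compact norm $\|\bar u\|_{H^{1-\delta}}$ with a coefficient that is time-integrable to the first power, which is exactly where the Strichartz regularity and the strict restriction $p<7-2m$ (equivalently $\delta>0$) enter; the substitution $Y=\Lambda^{1/2}$ is what lets me use $\tilde I\in L^1$ rather than the unavailable $\tilde I^2\in L^1$.
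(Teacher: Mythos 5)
Your strategy is essentially the paper's: the multiplier $\bar u_t+\eps\bar u$, the H\"older/Sobolev splitting with exponents $\tfrac{3}{1-\delta},\tfrac{6}{1+2\delta},2$ that places $\bar u$ in $H^{1-\delta}$, and the interpolation of the coefficient between $L^6$ and the Strichartz space $L^r$ with $\beta(p-1)\le q$ --- this last estimate, which you rightly call the heart, is exactly the paper's treatment of $K(t,u,v)$ in \eqref{eq4.11}. The genuine gap is in your damping cross term. Your symmetric splitting $\eps\int g'(\xi)\bar u_t\bar u\le\tfrac\eps2\int g'(\xi)\bar u_t^2+\tfrac\eps2\int g'(\xi)\bar u^2$ leaves a perturbation $\eps C M(t)\Lambda$ of the \emph{same order in $\eps$} as the decay you can produce: the only coercivity for $\|\nabla\bar u\|^2+\|\bar u\|^2$ comes from the $\eps\bar u$ multiplier, so necessarily $\mu\le c\,\eps$. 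Moreover both of your integrability claims for $M$ are false: $M\ge 1$, so $M\notin L^{3/2}(\R_+)$ and $\int_s^t M\ge t-s$; and even the non-constant part of $M$ is not globally integrable, because the lower bound in \eqref{G1} carries the slack $-c_2$, so the dissipation integral in \eqref{energy_est} controls $\int\int|u_t|^{m+1}$ only up to a term growing linearly in time. What is true is $\int_s^t M\le C_{\mathscr{B}}\,(1+(t-s))$ with a fixed constant $C_{\mathscr{B}}\ge 1$. The integrating factor $\exp\bigl(-\tfrac12\mu(t-s)+\tfrac{\eps C}{2}\int_s^t M\bigr)$ then decays only if $\eps C\,C_{\mathscr{B}}<\mu\le c\,\eps$, i.e.\ only if a fixed constant is less than one; this cannot be arranged by ``shrinking $\mu$'' (or $\eps$), since the good and the bad terms both scale linearly in $\eps$.

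The repair is exactly the paper's inequality \eqref{eq4.7}: use Young with unbalanced weights, $\eps\left|\int g'(\xi)\bar u_t\bar u\,\tdx\right|\le \tfrac12\int g'(\xi)|\bar u_t|^2\tdx+\tfrac{\eps^2}{2}\int g'(\xi)|\bar u|^2\tdx$, so that the bad term carries $\eps^2$ rather than $\eps$ (the paper does the analogous thing for the $f$-term, splitting it as $\tfrac1{\eps^2}K\|\bar u\|^2_{H^{1-\delta}}+\eps^2 K E_\al$). Integrated in time, the perturbation of the decay rate is then $\eps^2\int_0^t\bigl(1+\int g_2\bigr)\le \eps^2 t+\eps^2 C_{\mathscr{B}}$ --- here the globally finite dissipation integral $\int_0^\infty\int\bigl(g(u_t)u_t+g(v_t)v_t\bigr)\tdx\,\tdt\le C_{\mathscr{B}}$, which follows from \eqref{energy_est} without any slack issue, is what enters --- and this is dominated by the decay $\sim\eps t$ once $\eps$ is small, exactly as in the paper's estimate below \eqref{eq4.11}. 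With this single change your argument closes. Incidentally, the $Y=\Lambda^{1/2}$ substitution is correct but unnecessary: the source term appears as $K\|\bar u\|^2_{H^{1-\delta}}$, quadratic in $\bar u$, so Gronwall with $K\in L^1_{\rm unif}$ already yields the stated bound after pulling out $\sup_{s\in[0,t]}\|\bar u(s)\|^2_{H^{1-\delta}}$; no square-integrability of $K$ is ever needed.
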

\begin{proof}
  The difference $\bar{u}$ satisfies equation
  \begin{equation}\label{eqbar2}
    \bar{u}_{tt}+g(u_t)-g(v_t)-\Delta \bar{u}+\bar{u}+f(u)-f(v)=0.
  \end{equation}
  Multiplying equation \eqref{eqbar2} with $\bar{u}_t+\al \bar{u}$ and integrating on ${\mathbb{T}^3}$, we can obatin
  \begin{equation}\label{eq4.6}
    \begin{aligned}
       & \frac{\text{d}}{\text{d}t}E_\al(\bar{u}(t))+\al E_\al(\bar{u}(t))+\frac\al2\|\nabla \bar{u}\|^2+\frac\al2\|\bar{u}\|^2-\al^2\int_{\mathbb{T}^3}\bar{u}_t\bar{u}\td x-\frac{3\al}{2}\|\bar{u}_t\|^2 \\[3mm]
       & +\int_{\mathbb{T}^3} (g(u_t)-g(v_t))(\bar{u}_t+\al\bar{u})\text{d}x+\int_{\mathbb{T}^3} (f(u)-f(v))(\bar{u}_t+\al\bar{u})\text{d}x=0,
    \end{aligned}
  \end{equation}
  where the modified energy functional
  $$E_\al(\bar{u}(t))=\frac12\|\bar{u}_t\|^2+\frac12\|\nabla \bar{u}\|^2+\frac12\|\bar{u}\|^2+\al\int_{\mathbb{T}^3}\bar{u}_t\bar{u}\td x.$$
  For $0<\al<1/2$, by Cauchy's inequality, we have the equivalence
  $$\frac14\|\bar{u}_t\|^2+\frac14\|\nabla \bar{u}\|^2+\frac14\|\bar{u}\|^2\le E_\al(\bar{u}(t))\le \|\bar{u}_t\|^2+\|\nabla \bar{u}\|^2+\|\bar{u}\|^2.$$
  To deal with the last but one term in \eqref{eq4.6}, let us denote
  $$g_1(u_t,v_t) =\int_0^1 g^\prime(su_t+(1-s)v_t)\td s ~~\text{and~~} g_2(u_t,v_t)=g(u_t)u_t+g(v_t)v_t,$$
  from \eqref{G2}, we have
  $$0<\gamma \le g_1(u_t,v_t) \le C[1+g_2(u_t,v_t)]^{2/3},$$
  and consequently
  \begin{equation}\label{eq4.7}
    \begin{aligned}
       & \int_{\mathbb{T}^3} \left[g(u_t(t))-g(v_t(t))\right]\bar{u}_t(t) \td x +\al\int_{\mathbb{T}^3} \left[g(u_t(t))-g(v_t(t))\right] \bar{u}(t) \td x \\[2mm]
       & =\int_{\mathbb{T}^3} g_1(u_t,v_t)|\bar{u}_t(t)|^2 \td x + \al \int_{\mathbb{T}^3} g_1(u_t,v_t) \bar{u}_t(t) \bar{u}(t) \td x                     \\[2mm]
       & \ge \frac12\int_{\mathbb{T}^3} g_1(u_t,v_t)|\bar{u}_t(t)|^2 \td x -\frac{\al^2}{2}\int_{\mathbb{T}^3} g_1(u_t,v_t) |\bar{u}(t)|^2 \td x          \\[2mm]
       & \geq\frac{\gamma}{2}\|\bar{u}_t(t)\|^2-\al^2C\left(1+\int_{\mathbb{T}^3} g_2(u_t,v_t)\td x\right) \|\bar{u}(t)\|_{L^6(\mathbb{T}^3)}^2.
    \end{aligned}
  \end{equation}
  For the last term in \eqref{eq4.6}, using condition \eqref{F3}, we can obtain that
  \begin{equation}
    \begin{aligned}
      \left|\int_{\mathbb{T}^3} (f(u)-f(v))(\bar{u}_t+\al\bar{u})\text{d}x\right| & = \left|\int_{\mathbb{T}^3} \int_0^1f^\prime(su+(1-s)v)\td s \bar{u}(\bar{u}_t+\al\bar{u})\text{d}x\right| \\[2mm]
                                                                                  & \le C\int_{\mathbb{T}^3} (1+|u|^{p-1}+|v|^{p-1}) |\bar{u}|(|\bar{u}_t|+\al|\bar{u}|)\text{d}x.
    \end{aligned}
  \end{equation}
  Then applying H\"older's inequality with exponents $\frac{3}{1-\delta}$, $\frac{6}{1+2\delta}$ and $2$ yields
  \begin{equation}
    \begin{aligned}
          & \left|\int_{\mathbb{T}^3} (f(u)-f(v))(\bar{u}_t+\al\bar{u})\text{d}x\right|                                                                                                                                  \\[2mm]
      \le & C(1+\|u\|^{p-1}_{L^{\frac{3(p-1)}{1-\delta}}({\mathbb{T}^3})}+\|v\|^{p-1}_{L^{\frac{3(p-1)}{1-\delta}}({\mathbb{T}^3})})\|\bar{u}\|_{L^{\frac{6}{1+2\delta}}({\mathbb{T}^3})}(\|\bar{u}_t\|+\al\|\bar{u}\|).
    \end{aligned}
  \end{equation}
  Using the fractional Sobolev embedding
  \begin{equation}
    H^{1-\delta}({\mathbb{T}^3}) \subset L^{\frac{6}{1+2\delta}}({\mathbb{T}^3}),
  \end{equation}
  leads to
  \begin{equation}\label{eq4.11}
    \begin{aligned}
          & \left|\int_{\mathbb{T}^3} (f(u)-f(v))(\bar{u}_t+\al\bar{u})\text{d}x\right|                                                                                                                      \\[3mm]
      \le & C(1+\|u\|^{p-1}_{L^{\frac{3(p-1)}{1-\delta}}({\mathbb{T}^3})}+\|v\|^{p-1}_{L^{\frac{3(p-1)}{1-\delta}}({\mathbb{T}^3})})\|\bar{u}\|_{H^{1-\delta}({\mathbb{T}^3})}(\|\bar{u}_t\|+\al\|\bar{u}\|) \\[3mm]
      \le & K(t,u,v)(\frac1\al\|\bar{u}\|_{H^{1-\delta}({\mathbb{T}^3})}^2+\frac{\al^2}4\|\bar{u}_t\|^2+\frac{\al^3}{4}\|\bar{u}\|^2)                                                                        \\[3mm]
      \le & \frac1{\al^2} K(t,u,v)\|\bar{u}\|_{H^{1-\delta}({\mathbb{T}^3})}^2+ \al^2 K(t,u,v) E_\al(\bar{u}(t)),
    \end{aligned}
  \end{equation}
  where
  $$K(t,u,v)=C(1+\|u\|^{p-1}_{L^{\frac{3(p-1)}{1-\delta}}({\mathbb{T}^3})}+\|v\|^{p-1}_{L^{\frac{3(p-1)}{1-\delta}}({\mathbb{T}^3})}).$$
  Since $\frac{3(p-1)}{1-\delta}\le \frac{24(p-1)}{7} <7(3-m)< r$, the H\"older's inequality with exponents $\beta=(\frac16-\frac{1-\delta}{3p-3})/(\frac16-\frac1r)$ and $1-\beta$ gives
  \begin{equation*}
    \begin{aligned}
      \|u\|^{p-1}_{L^{\frac{3(p-1)}{1-\delta}}({\mathbb{T}^3})}\le \|u\|^{\beta (p-1)}_{L^{r}({\mathbb{T}^3})}\|u\|^{(1-\beta)(p-1)}_{L^{6}({\mathbb{T}^3})}.
    \end{aligned}
  \end{equation*}
  Note that
  $$\beta (p-1)=\frac{(p-3+2\delta)r}{r-6}\le \frac{(4-2m)r}{r-6}=m+1= q,$$
  it follows energy estimate \eqref{energy_estimate2} that $K(\cdot,u,v)\in L^1_{loc}(0,+\infty)$ and $\|K\|_{L^1(t,t+1)}$ is uniformly bounded for $t\ge 0$.

\medskip
  Taking $\al<\frac14\min\{1,\gamma, \inf_{t>0}\|K\|^{-1}_{L^1(t,t+1)}\}$ and substituting \eqref{eq4.7} and \eqref{eq4.11} into \eqref{eq4.6}, one has
  \begin{equation}
    \begin{aligned}
          & \frac{\text{d}}{\text{d}t}E_\al(\bar{u}(t))+\al\left(1-\al C\int_{\mathbb{T}^3} g_2(u_t,v_t)\td x-\al K(t,u,v)\right) E_\al(\bar{u}(t)) \\[3mm]
      \le & \frac1{\al^2} K(t,u,v)\|\bar{u}\|_{H^{1-\delta}({\mathbb{T}^3})}^2.
    \end{aligned}
  \end{equation}
  From the choice of $\al$, we have
  \begin{equation}
    \begin{aligned}
      \int_0^t\left(1-\al C\int_{\mathbb{T}^3} g_2(u_t,v_t)\td x-\al K(t,u,v)\right) & \ge t-\frac t2-\al C\int_0^t \int_{\mathbb{T}^3} g_2(u_t,v_t)\td x\td s \\[2mm]
                                                                                     & \ge \frac t 2-C.
    \end{aligned}
  \end{equation}
  Then applying Gronwall's inequality implies the desired conclusion.
\end{proof}

\subsection{The H\"{o}lder continuity}
For any $z\in \H$, in order to verify the H\"{o}lder continuity of trajectory $S(t)z$, by interpolation inequality
\begin{equation}
  \begin{aligned}
    \|S(t_1)z-S(t_2)z\|_{\H} & \le \|S(t_1)z-S(t_2)z\|^{\delta}_{\H^{\delta-1}}\|S(t_1)z-S(t_2)z\|^{1-\delta}_{\H^{\delta}}                                              \\
                             & =\left\|\int_{t_1}^{t_2}\frac{d}{dt}S(s)z ds\right\|_{\H^{\delta-1}}^{\delta}\|S(t_1)z-S(t_2)z\|^{1-\delta}_{\H^{\delta}}                 \\
                             & \le \left|\int_{t_1}^{t_2}\left\|\frac{d}{dt}S(s)z\right\|_{\H^{\delta-1}}ds\right|^{\delta}\|S(t_1)z-S(t_2)z\|^{1-\delta}_{\H^{\delta}},
  \end{aligned}
\end{equation}
it needs to prove that the asymptotic regularity of $S(t)$ from $\H$ to $\H^{\delta}$ and the boundedness of $\frac{d}{dt}S(t)$ in $\H^{\delta-1}$.
\subsubsection{Asymptotic regularity}
We decompose the solution of problem \eqref{eq1} with initial data $z_0=(u_0,u_1)$ into
$$S(t)z_0=S_v(t)z_0+S_w(t)z_0,$$
where $S_v(t)z_0=(v(t),v_t(t))$ solves the problem
\begin{equation}\label{veq}
  v_{tt}+g(u_t)-g(w_t)-\Delta v+v=0, v(0)=u_0,~v_t(0)=u_1,
\end{equation}
and the remainder $S_w(t)z_0=(w(t),w_t(t))$ satisfies
\begin{equation}\label{weq}
  w_{tt}+g( w_t)-\Delta w+w+f(u)=\phi, w(0)=0,~w_{t}(0)=0.
\end{equation}

\begin{lemma}\label{lem5.5}
  Under the assumptions of Theorem \ref{th4.1}, there exists a continuous increasing function $Q$ such that
  \begin{equation}\label{wH2}
    \|w_t(t)\|_{H^{\delta}({\mathbb{T}^3})}+\|w(t)\|_{H^{1+\delta}({\mathbb{T}^3})} \leq Q(\| (u_0,u_1)\|_{\H})
  \end{equation}
  for any $t\ge 0$.
\end{lemma}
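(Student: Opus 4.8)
The plan is to run a fractional higher-order energy estimate on the remainder equation \eqref{weq}, gaining $\delta$ spatial derivatives. Writing $A=1-\Delta$, so that $\|\cdot\|_{H^s}^2=\|A^{s/2}\cdot\|^2$, I would multiply \eqref{weq} by the modified multiplier $A^\delta w_t+\alpha A^\delta w$ (with $\alpha>0$ small, exactly as in the proof of Theorem \ref{qs}) and integrate over $\mathbb{T}^3$. The principal terms $w_{tt}$ and $-\Delta w+w=Aw$ produce $\frac{d}{dt}\mathcal{E}(t)$, where $\mathcal{E}(t)=\tfrac12\|A^{\delta/2}w_t\|^2+\tfrac12\|A^{(1+\delta)/2}w\|^2+\alpha\langle A^{\delta/2}w_t,A^{\delta/2}w\rangle$ is equivalent, for $\alpha$ small, to $\|w_t\|_{H^\delta}^2+\|w\|_{H^{1+\delta}}^2$, together with the dissipative contribution $\alpha\|w\|_{H^{1+\delta}}^2-\alpha\|w_t\|_{H^\delta}^2$. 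The target is a differential inequality $\frac{d}{dt}\mathcal{E}+\kappa\mathcal{E}\le h(t)$ with $\kappa>0$ and $h$ uniformly locally integrable in terms of the already-controlled $\mathcal{H}$- and $L^q(t,t+1;L^r)$-norms of $u$ from \eqref{energy_estimate2}; since $w(0)=w_t(0)=0$ forces $\mathcal{E}(0)=0$, Gronwall then yields \eqref{wH2} uniformly in $t$.

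The main obstacle is the damping contribution $\langle g(w_t),A^\delta w_t\rangle$: because $A^\delta$ is nonlocal for $0<\delta<1$, the pointwise monotonicity of $g$ no longer produces a sign directly. I would split $g=\gamma\,\mathrm{id}+G$ with $G'=g'-\gamma\ge0$ by \eqref{G2} and $G(0)=0$, so that $\langle g(w_t),A^\delta w_t\rangle=\gamma\|A^{\delta/2}w_t\|^2+\langle G(w_t),A^\delta w_t\rangle$. The remainder $\langle G(w_t),A^\delta w_t\rangle$ is then shown to be nonnegative by a Stroock--Varopoulos (C\'ordoba--C\'ordoba) type inequality, valid because $-(1-\Delta)^\delta$ generates a symmetric sub-Markovian semigroup and $G$ is nondecreasing; the low integrability of $G(w_t)\sim|w_t|^m$ must be handled by truncating $G$ and passing to the limit using the space-time bound $w_t\in L^{m+1}$ coming from \eqref{G1} and the energy inequality. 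This yields $\langle g(w_t),A^\delta w_t\rangle\ge\gamma\|A^{\delta/2}w_t\|^2$, which absorbs the bad term $-\alpha\|w_t\|_{H^\delta}^2$ once $\alpha<\gamma$. The auxiliary term $\alpha\langle g(w_t),A^\delta w\rangle$ is estimated through $g(w_t)\in H^{\delta-1}$ (since $g(w_t)\in L^{(m+1)/m}$ and $H^{1-\delta}\hookrightarrow L^{m+1}$ for $\delta\le1/8$, $m\le7/5$), Young's inequality, and the space-time integrability of $g(w_t)$.

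For the source $\phi-f(u)$, the forcing is the easy part: since $\phi$ is time independent, $\langle\phi,A^\delta w_t\rangle=\frac{d}{dt}\langle\phi,A^\delta w\rangle$ is a total derivative absorbed into $\mathcal{E}$, and $|\langle\phi,A^\delta w\rangle|\le\|\phi\|\,\|w\|_{H^{2\delta}}$ is controlled by the basic energy. The nonlinearity is more delicate: pairing $f(u)$ with $A^\delta w_t$ exhibits a one-derivative mismatch, since $f(u)$ lies only in a negative Sobolev space while the multiplier costs a full derivative, so I would integrate by parts in time, $\langle f(u),A^\delta w_t\rangle=\frac{d}{dt}\langle f(u),A^\delta w\rangle-\langle f'(u)u_t,A^\delta w\rangle$. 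The total-derivative part is handled by $f(u)\in H^{\delta-1}$, which holds because \eqref{F3} with $f(0)=f'(0)=0$ gives $|f(u)|\le C(|u|^2+|u|^p)$ and hence, via $u\in H^1\hookrightarrow L^6$, $f(u)\in H^{-(p-3)/2}\subset H^{\delta-1}$ precisely when $\delta\le(5-p)/2$, which is guaranteed by $\delta\le(7-2m-p)/2$ since $m>1$. The genuine remainder $\langle f'(u)u_t,A^\delta w\rangle$ is where the restrictions $3\le p<7-2m$ and $\delta\le1/8$ are used in full: after distributing the derivative so that $A^\delta w\in H^{1-\delta}\hookrightarrow L^{6/(1+2\delta)}$, I would bound $f'(u)u_t$ using \eqref{F3}, the higher spatial integrability $u\in L^r$ with $r=2(m+1)/(m-1)$ from \eqref{energy_estimate2}, and the extra integrability $u_t\in L^{m+1}$ from the damping, closing the time-integral by H\"older with exponents matched to $q=m+1$.

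Collecting everything, I expect a differential inequality of the form $\frac{d}{dt}\mathcal{E}+\kappa\mathcal{E}\le\epsilon\,\mathcal{E}^{(m+1)/2}+h(t)$ with $h$ uniformly locally integrable and $\epsilon$ proportional to $\alpha$; because the superlinearity is mild, $(m+1)/2\le6/5$, and $\mathcal{E}(0)=0$, choosing $\alpha$ small makes the linear dissipation dominate and a standard barrier argument provides a uniform-in-time bound for $\mathcal{E}(t)$, which is exactly \eqref{wH2}. I anticipate the fractional positivity of the nonlinear damping term to be the decisive technical point, with the matching of the H\"older exponents in the $f'(u)u_t$ estimate, relying on $u\in L^q(t,t+1;L^r)$ and $u_t\in L^{m+1}$, as the secondary difficulty that pins down the admissible range of $p$ and $\delta$.
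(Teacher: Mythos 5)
Your overall architecture is the same as the paper's: the multiplier $A^{2\delta}w_t+\alpha A^{2\delta}w$ (your $A=1-\Delta$ versus the paper's $(-\Delta)^{1/2}$ is only a normalization), the integration by parts in time that places $\langle f(u),A^{2\delta}w\rangle$ and $\langle \phi,A^{2\delta}w\rangle$ inside the modified energy and leaves the remainder $\langle f'(u)u_t,A^{2\delta}w\rangle$ to be controlled through the Strichartz bound $u\in L^{q}(t,t+1;L^{r})$, the exponent condition $\delta\le (5-p)/2$ for $f(u)\in H^{\delta-1}$, and the fractional positivity of the damping. On this last point your Stroock--Varopoulos argument for $G=g-\gamma\,\mathrm{id}$ is equivalent in substance to what the paper does via the Savostianov--Zelik difference representation combined with the Mean Value Theorem and $g'\ge\gamma$ from \eqref{G2}; both yield $\langle g(w_t),A^{2\delta}w_t\rangle\ge\gamma\|w_t\|^2_{H^{\delta}}$. (You do omit the Galerkin justification, which is needed because $A^{2\delta}w_t$ is not an admissible test function, but that is minor.)

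The genuine gap is in your closing ODE step. You skip the paper's preliminary step (i) --- multiplying the companion equation \eqref{veq} by $v_t$ to conclude that $(w,w_t)$ is \emph{uniformly bounded in} $\mathcal{H}$ --- and instead control $g(w_t)$ only through its space-time integrability (the analogue of \eqref{diss}). After Young's inequality this turns $\alpha\langle g(w_t),A^{2\delta}w\rangle$ into $\epsilon\,\mathcal{E}^{(m+1)/2}+\alpha C\|w_t\|^{m+1}_{L^{m+1}}$ with $\epsilon\sim\alpha$. But the dissipation rate $\kappa$ in your inequality also scales like $\alpha$, since it comes from the same $\alpha$-multiplier; hence the barrier threshold $(\kappa/2\epsilon)^{2/(m-1)}$ is a \emph{fixed constant}, independent of $\alpha$ and of the data, while the level to which $h$ forces $\mathcal{E}$ is of order $\|h\|_{L^1(t,t+1)}/(1-e^{-\kappa})\sim Q(\|(u_0,u_1)\|_{\mathcal{H}})/\alpha$. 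Shrinking $\alpha$ makes this worse, not better, so ``choosing $\alpha$ small makes the linear dissipation dominate'' fails for large initial data; the same obstruction appears if you try to absorb $K(t)\mathcal{E}^{1/2}$ via $\frac{\kappa}{4}\mathcal{E}+K^2/\kappa$, because $K$ is only uniformly $L^1_{loc}$ in time, not $L^2_{loc}$. The repair is exactly the paper's route: prove the uniform $\mathcal{H}$-bound on $(w,w_t)$ first, so that $\|g(w_t)\|_{L^{2/m}}\le C(1+\|w_t\|^m_{L^2})\le Q(\|(u_0,u_1)\|_{\mathcal{H}})$ \emph{pointwise in time}; then every problematic term is at most linear in $\mathcal{E}^{1/2}$ with uniformly locally integrable coefficients, and Gronwall applied to $(\mathcal{E}+C)^{1/2}$ (i.e., dividing the inequality by the square root, as in \eqref{5.40}) gives the uniform bound with no smallness of $\alpha$ relative to the data and no barrier argument at all.
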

\begin{proof}
  {\it(i) The $\H$-norm estimate and dissipativity relation.}

\medskip
  Multiplying both sides of equation \eqref{veq} by $v_t$ and integrating over $(0,t)\times{\mathbb{T}^3}$, we obtain
  \begin{equation}\label{vest}
    \begin{aligned}
      \|v_t(t)\|^2+\|\nabla v(t)\|^2+\|v(t)\|^2\le  \|u_1\|^2+ \|\nabla u_0\|^2+ \|u_0\|^2, ~~\forall t\ge 0.
    \end{aligned}
  \end{equation}
  Thus $(w,w_t)=(u-v,u_t-v_t)$ is also bounded in $\H$ uniformly for $t>0$.

\medskip
  Now multiplying both sides of \eqref{weq} by $w_t$, integrating over $(0 ,t) \times {\mathbb{T}^3}$, we obtain
  \begin{equation}\label{gw}
    \int_{0}^{t}\int_{\mathbb{T}^3} g(w_t(t))w_t(t)\td x\td t \leq \|\phi\|^2+\int_{0}^{t}\int_{\mathbb{T}^3} |f(u(t))w_{t}(t)| \td x\td t.
  \end{equation}
  Analogous to \eqref{fuv}, we have $f(u)\in L^1(0,T;L^2(\mathbb{T}^3))$ and $\|f(u)\|_{L^{1}(t,t+1;L^2(\mathbb{T}^2))}\le Q(\| (u_0,u_1)\|_{\H})$. Then taking into account \eqref{energy_estimate2}, we can obtain that
  \begin{equation}
    \begin{aligned}
      \int_{0}^{t}\int_{\mathbb{T}^3} |f(u(t))w_{t}(t)| \td x\td t \le \int_{0}^{t}\|f(u(t))\| \cdot\|w_t(t)\|\td t\le (1+t)Q(\| (u_0,u_1)\|_{\H}).
    \end{aligned}
  \end{equation}
  Substituting above inequality into \eqref{gw} yields
  \begin{equation}\label{diss}
    \begin{aligned}
      \int_{0}^{t}\int_{\mathbb{T}^3} g(w_t(t))w_t(t)\td x\td t & \le (1+t)Q(\| (u_0,u_1)\|_{\H}).
    \end{aligned}
  \end{equation}

\medskip
  {\it(ii) The $\H^{1+\delta}$-norm estimate.}

\medskip
  Denoting by $A=(-\Delta)^{1/2}$. To derive the uniform bound, we will multiply equation \eqref{eq1} by $A^{2\delta} w_t+\alpha A^{2\delta}w$. However, since $A^{2\delta} w_t$ belongs to $H^{-2\delta}$, it cannot directly serve as a test function. Therefore, a formal derivation of the a priori estimate is presented next.

\medskip
  Multiplying both sides of \eqref{weq} by $A^{2\delta} w_t+\alpha A^{2\delta}w$ with $\al \in (0, 1)$ and $\delta=\min\{(7-2m-p)/2,1/5\}$, then integrating over ${\mathbb{T}^3}$, we find
  \begin{equation}\label{Delta_w}
    \begin{aligned}
       & \frac{\td}{\td t}\Phi(t) +\al\Phi(t)+\int_{\mathbb{T}^3} g(w_t)A^{2\delta} w_t\td x+\frac\al2\|A^{1+\delta}w\|^{2}+\frac\al2\|A^{\delta}w\|^{2}-\frac{3\al}{2}\|A^{\delta}w_{t}\|^{2} \\
       & -\al^2\int_{\mathbb{T}^3}A^{\delta} w_tA^{\delta} w\td x+\al\int_{\mathbb{T}^3} g(w_t)A^{2\delta} w\td x-\int_{\mathbb{T}^3} f^\prime(u)u_tA^{2\delta} w\td x=0.
    \end{aligned}
  \end{equation}
  where
  \begin{equation*}
    \begin{aligned}
      \Phi(t)= & {\frac{1}{2}}\|A^{\delta}w_{t}\|^{2}+{\frac{1}{2}}\|A^{1+\delta}w\|^{2}+\frac12\|A^{\delta}w\|^2+\al\int_{\mathbb{T}^3}A^{\delta}w_tA^{\delta} w\td x \\
               & +\int_{\mathbb{T}^3} f(u)A^{2\delta} w\td x-\int_{\mathbb{T}^3} \phi A^{2\delta} w\td x.
    \end{aligned}
  \end{equation*}
  Since $5-2\delta\ge p+2m-2\ge p$,
  \begin{equation}\label{eq4_23}
    \begin{aligned}
      \int_{\mathbb{T}^3} f(u)A^{2\delta} w\td x & \le \|f(u)\|_{L^{\frac{6}{5-2\delta}}({\mathbb{T}^3})}\|A^{2\delta} w\|_{L^{\frac{6}{1+2\delta}}({\mathbb{T}^3})} \\[2mm]
                                                 & \le C\|f(u)\|_{L^{\frac{6}{p}}({\mathbb{T}^3})}\|A^{2\delta} w\|_{H^{1-\delta}({\mathbb{T}^3})}                   \\[2mm]
                                                 & =C\|f(u)\|_{L^{\frac{6}{p}}({\mathbb{T}^3})}(\|A^{1+\delta}w\|^2+\|A^{\delta}w\|^2)^{1/2}                         \\[2mm]
                                                 & \le C\|u\|^{2p}_{H^1({\mathbb{T}^3})}+\frac18(\|A^{1+\delta}w\|^2+\|A^{\delta}w\|^2).
    \end{aligned}
  \end{equation}
  we have
  \begin{equation}\label{eq4_24}
    \Phi(t)\ge \frac14\left(\|A^{\delta}w_{t}\|^{2}+ \|A^{1+\delta}w\|^{2}+\|A^{\delta}w\|^2\right)-C,
  \end{equation}
  we just need the lower bound estimate of $\Phi(t)$ since $\Phi(0)=0$.

\medskip
  According to \cite[Lemma 2.1]{savostianov14},
  \begin{equation}
    \int_{\mathbb{T}^3} g(w_t)A^{2\delta} w_t\td x=c\int_{\R^3}\int_{\mathbb{T}^3}\frac{(g(w_t(x+h))-g(w_t(x)))(w_t(x+h)-w_t(x))}{|h|^{3+2\delta}}\td x\td h.
  \end{equation}
  By the Mean Value Theorem, we have
  \begin{equation}
    g(w_t(x+h))-g(w_t(x))=\int_0^1 g^\prime(\tau w_t(x+h)+(1-\tau)w_t(x))\td \tau(w_t(x+h)-w_t(x)).
  \end{equation}
  Then by virtue of \eqref{G2},
  \begin{equation}
    \int_{\mathbb{T}^3} g(w_t)A^{2\delta} w_t\td x\ge c\gamma \int_{\R^3}\int_{\mathbb{T}^3}\frac{(w_t(x+h)-w_t(x))^2}{|h|^{3+2\delta}}\td x\td h=\gamma\|A^{\delta}w_{t}\|^{2}.
  \end{equation}

\medskip
  For the first term on the last line of \eqref{Delta_w}, we have
  \begin{equation}
    \begin{aligned}
      \int_{\mathbb{T}^3} g(w_t)A^{2\delta} w\td x & \le \|g(w_t)\|_{L^{\frac{2}{m}}({\mathbb{T}^3})}\|A^{2\delta} w\|_{L^{\frac{2}{2-m}}({\mathbb{T}^3})} \\[2mm]
                                                   & \le C(1+\|w_t\|^m)\|A^{2\delta} w\|_{L^{\frac{6}{1+2\delta}}({\mathbb{T}^3})}                         \\[2mm]
                                                   & \le C(1+\|w_t\|^m)\|A^{2\delta} w\|_{H^{1-\delta}({\mathbb{T}^3})}                                    \\[2mm]
                                                   & \le  Q(\| (u_0,u_1)\|_{\H})\|A^{2\delta} w\|_{H^{1-\delta}({\mathbb{T}^3})}.
    \end{aligned}
  \end{equation}
  For the second term on the last line of \eqref{Delta_w}, by H\"older's inequality with conjugate exponents $(\frac{3}{1-\delta}, 2,\frac{6}{1+2\delta})$, we have
  \begin{equation}\label{eq4_29}
    \begin{aligned}
      \int_{\mathbb{T}^3} f^\prime(u)u_tA^{2\delta} w\td x & \le \|f^\prime(u)\|_{L^{\frac3{1-\delta}}({\mathbb{T}^3})} \|u_t\|\cdot \|A^{2\delta} w\|_{L^{\frac{6}{1+2\delta}}({\mathbb{T}^3})} \\[2mm]
                                                           & \le C\|f^\prime(u)\|_{L^{\frac3{1-\delta}}({\mathbb{T}^3})} \|u_t\|\cdot \|A^{2\delta} w\|_{H^{1-\delta}({\mathbb{T}^3})}           \\[2mm]
                                                           & \le C(1+\|u\|^{p-1}_{L^{\frac{3(p-1)}{1-\delta}}({\mathbb{T}^3})}) \|u_t\|\cdot \|A^{2\delta} w\|_{H^{1-\delta}({\mathbb{T}^3})}    \\[2mm]
                                                           & =K(t)\|A^{2\delta} w\|_{H^{1-\delta}({\mathbb{T}^3})}.
    \end{aligned}
  \end{equation}
  To evaluate $K(t)$, since $\frac{3(p-1)}{1-\delta}\le \frac{15(p-1)}{4}\le \frac{15(3-m)}{2}< r$, by H\"older's inequality
  \begin{equation*}
    \begin{aligned}
      \|u\|^{p-1}_{L^{\frac{3(p-1)}{1-\delta}}({\mathbb{T}^3})}\le \|u\|^{\beta (p-1)}_{L^{r}({\mathbb{T}^3})}\|u\|^{(1-\beta)(p-1)}_{L^{6}({\mathbb{T}^3})},
    \end{aligned}
  \end{equation*}
  where
  $$\beta=(\frac16-\frac{1-\delta}{3p-3})/(\frac16-\frac1r).$$
  Notice that
  $$\beta (p-1)=\frac{(p-3+2\delta)r}{r-6}\le \frac{(4-2m)r}{r-6}=m+1= q,$$
  thus $K(\cdot)\in L^1_{loc}(0,+\infty)$ and $\|K\|_{L^1(t,t+1)}\le Q(\| (u_0,u_1)\|_{\H})$ for $t\ge 0$.

\medskip
  Choosing $\al$ be small enough, substituting inequalities \eqref{eq4_23}-\eqref{eq4_29} into \eqref{Delta_w} yields
  \begin{equation}\label{}
    \begin{aligned}
      \frac{\td}{\td t} (\Phi(t)+C)^{1/2}+ \al (\Phi(t)+C)^{1/2}\le K(t)+Q(\| (u_0,u_1)\|_{\H}).
    \end{aligned}
  \end{equation}
  We can derive from Gronwall's inequality that
  \begin{equation}\label{5.40}
    (\Phi(t)+C)^{1/2}\le C^{1/2}e^{-\al t}+\int_0^t e^{\al (s-t)}(K(s)+Q(\| (u_0,u_1)\|_{\H}))\td s\le Q(\| (u_0,u_1)\|_{\H}).
  \end{equation}
  Above arguments can be justified rigorously using the Galerkin approximation method. We know that there exists a complete orthonormal basis of $L^2(\mathbb{T}^3)$, $\{e_i\}_{i\in \mathbb{N}}$ made of eigenvectors of $-\Delta$ with Dirichlet boundary condition. Define the finite dimensional projection $P_n$ as
  $$P_nu=\sum_{i=1}^{n}a_ie_i\text{\quad for\quad} u=\sum_{i=1}^{\infty}a_ie_i\in L^2(\mathbb{T}^3).$$

\medskip
  Since $\{e_i\}$ are smooth functions, then $P_n u$ is smooth with respect to the spatial variable.

\medskip
  For the approximating equation,
  \begin{equation}\label{apeq}
  u^n_{tt}+P_n g(u^n_{t})-\Delta u^n+u^n+P_nf(u^n)=P_n\phi,  u^n(0,x)=P_n u_0,~~ u^n_{t}(0,x)=P_nu_1,
  \end{equation}
  we also decompose the solution to $u^n(t)=v^n(t)+w^n(t)$, where $v^n(t)$ solves the problem
  \begin{equation}\label{vneq}
    v^n_{tt}+P_ng(u^n_t)-g(w^n_t)-\Delta v^n+v^n=0, v^n(0)=P_nu_0,~v^n_t(0)=P_nu_1,
  \end{equation}
  and the remainder $w^n(t)$ satisfies
  \begin{equation}\label{wneq}
    w^n_{tt}+P_ng(w^n_t)-\Delta w^n+w^n+P_nf(u)=P_n\phi, w^n(0)=0,~w^n_{t}(0)=0.
  \end{equation}
  Then $v^n(t)$ and $w^n(t)$ have the same estimates as \eqref{vest} and \eqref{5.40}. Then passing the limit $n\to \infty$ and applying the lower semi-continuity of norms, we can derive the desired regularity estimate on $w(t)$.

\medskip
  This completes the proof.
\end{proof}

\begin{lemma}\label{lem5.6}
  Under the assumptions of Theorem \ref{th4.1}, there exists positive constant
  $\rho=\rho(\| (u_0,u_1)\|_{\H})>0$ such that
  \begin{equation}\label{4.31}
    \|v_t(t)\|+\|\nabla v\|+\|v\|\le C e^{-\rho t}\| (u_0,u_1)\|_{\H}.
  \end{equation}
\end{lemma}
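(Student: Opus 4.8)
The plan is to prove exponential decay of the modified energy
$$E_\al(v(t))=\tfrac12\|v_t\|^2+\tfrac12\|\nabla v\|^2+\tfrac12\|v\|^2+\al\int_{\mathbb{T}^3}v_tv\,\td x,$$
which for $0<\al<1/2$ is equivalent to $\|v_t\|^2+\|\nabla v\|^2+\|v\|^2$, exactly as in the proof of Theorem \ref{qs}. First I would multiply equation \eqref{veq} by $v_t+\al v$ and integrate over ${\mathbb{T}^3}$. Writing $u=v+w$, so that $u_t-w_t=v_t$, the Mean Value Theorem gives $g(u_t)-g(w_t)=g_1(u_t,w_t)\,v_t$ with $g_1(u_t,w_t)=\int_0^1 g'(su_t+(1-s)w_t)\,\td s$, and \eqref{G2} yields $\ga\le g_1(u_t,w_t)\le C(1+g_2(u_t,w_t))^{2/3}$ where $g_2(u_t,w_t)=g(u_t)u_t+g(w_t)w_t$, just as in \eqref{eq4.7}. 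The damping contribution is therefore $\int_{\mathbb{T}^3}g_1|v_t|^2\,\td x\ge\ga\|v_t\|^2$, which supplies the dissipation in the velocity.

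The central point is to control the indefinite cross term $\al\int_{\mathbb{T}^3}g_1 v_tv\,\td x$. I would split
$$\Big|\al\int_{\mathbb{T}^3}g_1v_tv\,\td x\Big|\le\tfrac12\int_{\mathbb{T}^3}g_1|v_t|^2\,\td x+\tfrac{\al^2}{2}\int_{\mathbb{T}^3}g_1|v|^2\,\td x,$$
absorbing the first piece into the damping, and bound the second by H\"older's inequality with exponents $3/2$ and $3$, the upper bound in \eqref{G2}, and $H^1\hookrightarrow L^6$:
$$\int_{\mathbb{T}^3}g_1|v|^2\,\td x\le C\Big(\int_{\mathbb{T}^3}(1+g_2)\,\td x\Big)^{2/3}\|v\|_{L^6({\mathbb{T}^3})}^2\le C(1+G(t))^{2/3}\big(\|\nabla v\|^2+\|v\|^2\big),$$
where $G(t)=\int_{\mathbb{T}^3}g_2(u_t,w_t)\,\td x$. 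Taking $\al\le\ga/4$ and collecting all terms, the dissipation dominates $\al E_\al(v)$ and I arrive at a differential inequality of the form
$$\frac{\td}{\td t}E_\al(v)+\al E_\al(v)\le C\al^2(1+G(t))^{2/3}E_\al(v).$$

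Finally I would integrate this inequality, and here lies the only real difficulty: the coefficient $g_1$ is not pointwise bounded, so $G$ is controlled only on average. From the dissipativity relation \eqref{diss} together with the energy inequality \eqref{energy_est} and estimate \eqref{energy_estimate2} one has $\int_0^tG(s)\,\td s\le(1+t)Q(\|(u_0,u_1)\|_{\H})$, whence by H\"older
$$\int_0^t(1+G(s))^{2/3}\,\td s\le t^{1/3}\Big(\int_0^t(1+G(s))\,\td s\Big)^{2/3}\le C(1+t).$$
Gronwall's inequality then gives $E_\al(v(t))\le E_\al(v(0))\exp\big(-\al t+C\al^2(1+t)\big)$, and for $\al$ small enough (depending on $\|(u_0,u_1)\|_{\H}$ through $Q$) the coefficient of $t$ equals $-\al+C\al^2<0$, so $E_\al(v(t))\le CE_\al(v(0))e^{-2\rho t}$ for some $\rho=\rho(\|(u_0,u_1)\|_{\H})>0$. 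Taking square roots, using the equivalence of $E_\al$ with $\|v_t\|^2+\|\nabla v\|^2+\|v\|^2$ and the bound $E_\al(v(0))\le C\|(u_0,u_1)\|_{\H}^2$ from \eqref{vest}, yields \eqref{4.31}. I expect the balancing of the linear-in-$t$ growth of $\int_0^t(1+G)^{2/3}$ against the decay rate $\al$—by choosing $\al$ small—to be the crux, precisely the mechanism already exploited in \eqref{eq4.7}; as in Lemma \ref{lem5.5}, the formal multiplication by $v_t+\al v$ is made rigorous through the Galerkin scheme \eqref{vneq}.
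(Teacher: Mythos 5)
Your proposal is correct and follows essentially the same route as the paper: multiply \eqref{veq} by $v_t+\al v$, use the Mean Value Theorem and \eqref{G2} exactly as in \eqref{eq4.7} to extract the dissipation $\gamma\|v_t\|^2$ and control the cross term by $(1+\int_{\mathbb{T}^3}g_2\,\td x)$ times $\|v\|_{L^6}^2$, then bound $\int_0^t\int_{\mathbb{T}^3}g_2\,\td x\,\td s\le(1+t)Q(\|(u_0,u_1)\|_{\H})$ via \eqref{energy_est} and \eqref{diss}, and close with Gronwall after choosing $\al$ small relative to $Q$. The only cosmetic difference is that you keep the power $(1+G)^{2/3}$ and apply H\"older in time, whereas the paper simply bounds this coefficient linearly by $1+G$; both yield the same linear-in-$t$ integral and the same decay rate $\rho$ depending on $\|(u_0,u_1)\|_{\H}$.
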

\begin{proof}
  Multiplying both sides of equation \eqref{veq} by $v_t + \al v$, $\al \in (0, 1)$ and integrating over ${\mathbb{T}^3}$, we obtain
  \begin{equation}\label{4.32}
    \begin{aligned}
       & {\frac{\td}{\td t}}\left(\|v_t\|^2+\|\nabla v\|^2+\|v(t)\|^2+2\al \langle v_t,v\rangle\right)+2\al \|\nabla v\|^{2}+2\al \| v\|^{2}-2\al\| v_t\|^{2} \\
       & +2\int_{\mathbb{T}^3} \left[g(u_t)-g(w_t)\right](v_t+\al v)\td x=0.
    \end{aligned}
  \end{equation}
  For the last term, similar to the derivation of \eqref{eq4.7}, we have
  \begin{equation}
    2\int_{\mathbb{T}^3} (g(u_t)-g(w_t))(v_t+\al v)\text{d}x \ge \gamma\|v_t\|^2-\al^2C\left(1+\int_{\mathbb{T}^3} g_2(u_t,w_t)\td x\right) \|v(t)\|_{L^6(\mathbb{T}^3)}^2.
  \end{equation}
  So taking into account this inequality in \eqref{4.32} with $\al<\min\{\gamma/4,1/2\}$ we find
  \begin{equation}\label{5.44}
    \begin{aligned}
          & {\frac{\td}{\td t}}\left(\|v_t\|^2+\|\nabla v\|^2+\| v\|^{2}+2\al \langle v_t,v\rangle\right)+\al (\| v_t\|^{2}+\|\nabla v\|^{2}+\| v\|^{2}+2\al \langle v_t,v\rangle) \\
      \le & \al^2C \int_{\mathbb{T}^3} g_2(u_t,w_t)\td x \|v\|_{L^6(\mathbb{T}^3)}^2                                                                                               \\
      \le & \al^2C \int_{\mathbb{T}^3} g_2(u_t,w_t)\td x (\| v_t\|^{2}+\|\nabla v\|^{2}+\| v\|^{2}+2\al \langle v_t,v\rangle).
    \end{aligned}
  \end{equation}
  From  $\int_{0}^{+\infty}\int_{{\mathbb{T}^3}}g(u_{t})u_{t}\td x\td t\le Q(\| (u_0,u_1)\|_{\H})$ and \eqref{diss}, it follows that
  \begin{equation}
    \al^2C\int_{0}^{t}\int_{\mathbb{T}^3} g_2(u_t,w_t)\td x \td s \le  (1+t)\al^2CQ(\| (u_0,u_1)\|_{\H})\le \frac\al2(1+t).
  \end{equation}
  Taking $\al$ to be small enough such that
  \begin{equation}
    \al^2C\int_{0}^{t}\int_{\mathbb{T}^3} g_2(u_t,w_t)\td x \td s \le \frac\al2(1+t),
  \end{equation}
  then by Gronwall's inequality, we have
  \begin{equation}
    \begin{aligned}
      \|v_t(t)\|^2+\|\nabla v(t)\|^2+\| v(t)\|^{2} & \le C\left(\|u_1\|^2+\|\nabla u_0\|^2+\| u_0\|^{2}\right)e^{-\frac\al2 t}.
    \end{aligned}
  \end{equation}
\end{proof}

We now turn to the asymptotic regularity.
\begin{lemma}\label{le6.7}
  Under the assumptions of Theorem \ref{th4.1}, there exists a compact positive invariant set $\mathscr{B}_1$ in $\H^{\delta}$ and a positive constant $\mu$ such that, for any bounded set $B\subset {\H}$,
  $$ dist_{\H}(S(t)B, \mathscr{B}_1)\le Q(\|B\|_{\H})e^{-\mu t}\quad \text{for all } t\ge 0.$$
\end{lemma}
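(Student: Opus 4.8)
The plan is to assemble the compact exponentially attracting set from the dissipativity of the problem together with the exponential decomposition established in Lemmas~\ref{lem5.5} and \ref{lem5.6}. First I recall that, as in the global attractor analysis of Section~3, the energy estimate \eqref{energy_estimate2} and the dissipative condition \eqref{F2} furnish a bounded absorbing set $\mathscr{B}_0\subset\H$: for every bounded $B\subset\H$ there is a time $t_B=t_B(\|B\|_{\H})$ with $S(t)B\subset\mathscr{B}_0$ for all $t\ge t_B$. For initial data in $\mathscr{B}_0$ the splitting $S(t)z_0=S_v(t)z_0+S_w(t)z_0$ gives, by Lemma~\ref{lem5.5}, a uniform bound $\|S_w(t)z_0\|_{\H^{\delta}}\le R_1$ for all $t\ge0$ and $z_0\in\mathscr{B}_0$, while Lemma~\ref{lem5.6} yields $\|S_v(t)z_0\|_{\H}\le Ce^{-\rho t}$.

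Next I would introduce the candidate attracting set
\[
  \mathbb{K}=\overline{\bigcup_{t\ge0}S_w(t)\mathscr{B}_0}^{\,\H}.
\]
The union is bounded in $\H^{\delta}$ by the first bound above; since $\H^{\delta}$ embeds compactly in $\H$, its $\H$-closure $\mathbb{K}$ is compact in $\H$, and by weak lower semicontinuity of $\|\cdot\|_{\H^{\delta}}$ it remains bounded in $\H^{\delta}$. Exponential attraction of $\mathscr{B}_0$ is then immediate from $\mathrm{dist}_{\H}(S(t)z_0,\mathbb{K})\le\|S(t)z_0-S_w(t)z_0\|_{\H}=\|S_v(t)z_0\|_{\H}\le Ce^{-\rho t}$. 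For an arbitrary bounded $B$ I would absorb into $\mathscr{B}_0$ after the time $t_B$ and use the semigroup property together with the finite-time energy bound, so that $\mathrm{dist}_{\H}(S(t)B,\mathbb{K})\le Q(\|B\|_{\H})e^{-\mu t}$ with $\mu=\rho$.

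To upgrade $\mathbb{K}$ to a positively invariant set, I would take its forward hull
\[
  \mathscr{B}_1=\overline{\bigcup_{\tau\ge0}S(\tau)\mathbb{K}}^{\,\H}.
\]
Positive invariance $S(t)\mathscr{B}_1\subset\mathscr{B}_1$ follows from the continuity of $S(t)$ together with the semigroup property, and exponential attraction of every bounded set is inherited from $\mathbb{K}\subset\mathscr{B}_1$. The genuine point is the compactness of $\mathscr{B}_1$, which I would obtain by showing that $\bigcup_{\tau\ge0}S(\tau)\mathbb{K}$ is totally bounded in $\H$. Given $\varepsilon>0$, I apply Lemma~\ref{lem5.5} once more with initial data in $\mathbb{K}$ (which only uses the $\H$-bound of $\mathbb{K}$) to place $S_w(\tau)\mathbb{K}$ in a fixed, hence $\H$-precompact, ball of $\H^{\delta}$, while $\|S_v(\tau)\xi\|_{\H}\le Ce^{-\rho\tau}<\varepsilon/2$ for $\tau\ge T_{\varepsilon}$; thus the tail $\bigcup_{\tau\ge T_{\varepsilon}}S(\tau)\mathbb{K}$ is covered by finitely many $\varepsilon$-balls. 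The remaining piece $\{S(\tau)\xi:\tau\in[0,T_{\varepsilon}],\,\xi\in\mathbb{K}\}$ is the image of the compact set $[0,T_{\varepsilon}]\times\mathbb{K}$ under the jointly continuous flow, hence compact and covered by finitely many $\varepsilon$-balls as well.

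I expect the main obstacle to be precisely this compactness of the invariant hull: unlike $\mathbb{K}$, whose compactness is read off directly from the $\H^{\delta}$-bound on the $w$-component, the forward images $S(\tau)\mathbb{K}$ carry the contracting part $S_v(\tau)$, which Lemma~\ref{lem5.6} controls only in $\H$ and not in $\H^{\delta}$; compactness must therefore be extracted from the time-splitting and total-boundedness argument rather than from a single uniform higher-order bound. Were one to demand that $\mathscr{B}_1$ itself be bounded in $\H^{\delta}$, an additional $\H^{\delta}$-estimate for the $v$-equation \eqref{veq} would be required, and there the damping difference $g(u_t)-g(w_t)$ does not reduce as cleanly to the coercive fractional term $\gamma\|A^{\delta}v_t\|^{2}$ exploited in Lemma~\ref{lem5.5}.
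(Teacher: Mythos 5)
Your construction coincides with the paper's up to the final step: the same attracting set $\mathscr{K}=\overline{\bigcup_{t\ge0}S_w(t)\mathscr{B}_0}^{\,\H}$ (your $\mathbb{K}$), the same exponential attraction argument combining Lemma \ref{lem5.6} with absorption into $\mathscr{B}_0$, and the same forward hull $\mathscr{B}_1=\overline{\bigcup_{\tau\ge0}S(\tau)\mathscr{K}}^{\,\H}$. The gap is in what you establish about $\mathscr{B}_1$. The lemma asserts a compact positive invariant set \emph{in $\H^{\delta}$}, and this is not decorative: the subsequent Lemmas \ref{le4.9} and \ref{le6.10} both invoke the finite quantity $\|\mathscr{B}_1\|_{\H^{\delta}}$ (e.g.\ in \eqref{eqq6.17} and \eqref{eq6.14}), and the whole exponential attractor construction runs on $\mathscr{B}_1$ precisely because of that uniform $\H^{\delta}$ bound. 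Your total-boundedness argument (tail splitting for $\tau\ge T_{\varepsilon}$ plus joint continuity of the flow on $[0,T_{\varepsilon}]\times\mathbb{K}$) is plausible as far as it goes, but it yields compactness of $\mathscr{B}_1$ in $\H$ only; it gives no control whatsoever on $\|S(\tau)\xi\|_{\H^{\delta}}$ for finite $\tau$, so the conclusion actually stated (and actually used later) is left unproved. You acknowledge this in your last paragraph, which means the proposal proves a strictly weaker statement than the lemma.

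Moreover, your diagnosis of what the missing step would require is off: no $\H^{\delta}$-energy estimate for the $v$-equation \eqref{veq} is needed, and the difficulty you point out about $g(u_t)-g(w_t)$ never arises. The paper's argument is a propagation-of-regularity estimate for the \emph{full} equation \eqref{eq1}: for initial data $z\in\mathscr{K}$, which is already bounded in $\H^{\delta}$ by Lemma \ref{lem5.5}, multiply \eqref{eq1} by $A^{2\delta}u_t+\al A^{2\delta}u$ and repeat the computations \eqref{eq4_23}--\eqref{eq4_29} verbatim with $w$ replaced by $u$. The damping contribution is again coercive,
\begin{equation*}
  \int_{\mathbb{T}^3} g(u_t)A^{2\delta}u_t\,\td x\ \ge\ \gamma\|A^{\delta}u_t\|^{2},
\end{equation*}
by \eqref{G2} and the fractional identity from \cite{savostianov14}, exactly as in Lemma \ref{lem5.5}; the only change is that the functional $\Phi(0)$ is no longer zero but is controlled by $\|\mathscr{K}\|_{\H^{\delta}}$, and Gronwall's inequality then gives $\sup_{t\ge0}\|S(t)\mathscr{K}\|_{\H^{\delta}}<\infty$. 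Hence $\bigcup_{\tau\ge0}S(\tau)\mathscr{K}$ is bounded in $\H^{\delta}$, so its $\H$-closure $\mathscr{B}_1$ is bounded in $\H^{\delta}$ and compact in $\H$. This single estimate replaces your time-splitting argument and delivers the stronger conclusion the lemma requires.
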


\begin{proof}
  Let $\mathscr{B}_0$ be an absorbing set of $S(t)$, denote $\mathscr{K}=\overline{\bigcup_{t\ge 0}S_w(t)\mathscr{B}_0}^{\H}$, recall that by Lemma \ref{lem5.5}, $\mathscr{K}$ is bounded in $\H^{\delta}$. We will show that $\mathscr{K}$ is an exponentially attracting set.

\medskip
  Since $\mathscr{B}_0$ is an absorbing set, then for any given bounded set $B\subset \H$, there exists a $T=T(\|B\|_{\H})$ such that, $S(t)B\subset \mathscr{B}_0$, for all $t\ge T$. This implies that, for $t\ge T$,
  \begin{equation}
    \begin{aligned}
      dist_{\H}(S(t)B, \mathscr{K}) & = dist_{\H}(S(t-T)S(T)B, \mathscr{K})                      \\[2mm]
                                    & \le dist_{\H}(S(t-T)S(T)B, S_w(t-T)\mathscr{B}_0)          \\[2mm]
                                    & \le dist_{\H}(S(t-T)\mathscr{B}_0,  S_w(t-T)\mathscr{B}_0) \\[2mm]
                                    & \le\|S_v(t-T)\mathscr{B}_0\|_{\H}.
    \end{aligned}
  \end{equation}
  Combining this with \eqref{4.31} leads to
  \begin{equation}
    \begin{aligned}
      dist_{\H}(S(t)B, \mathscr{K})\le\|S_v(t-T)\mathscr{B}_0\|_{\H}\le Ce^{-\rho(t-T)}\|\mathscr{B}_0\|_{\H}.
    \end{aligned}
  \end{equation}
  On the other hand, by energy estimate \eqref{energy_estimate2}, we have the uniform estimate
  \begin{equation}
    dist_{\H}(S(t)B, \mathscr{K}) \le Q(\|B\|_{\H}),\quad t \ge 0.
  \end{equation}
  Combining the above two inequalities, we obtain
  \begin{equation}
    dist_{\H}(S(t)B, \mathscr{K}) \le (Q(\|B\|_{\H})e^{\rho T}+Ce^{\rho T}\|\mathscr{B}_0\|_{\H})e^{-\rho t},\quad t \ge 0.
  \end{equation}

\medskip
  Towards to invariant, we define $\mathscr{B}_1=\overline{\bigcup_{t\ge 0}S(t)\mathscr{K}}^{\H}$, then $\mathscr{B}_1$ is a positive invariant set and also satisfies exponentially attracting property
  \begin{equation}
    dist_{\H}(S(t)B, \mathscr{B}_1) \le (Q(\|B\|_{\H})e^{\rho T}+Ce^{\rho T}\|\mathscr{B}_0\|_{\H})e^{-\rho t},\quad t \ge 0.
  \end{equation}

\medskip
  It remains to show that $\mathscr{B}_1$ is bounded in $\H^{\delta}$. Multiplying both sides of \eqref{eq1} by $A^{2\delta} u_t+\alpha A^{2\delta}u$ with $\al \in (0, 1)$, then repeating the arguments from \eqref{eq4_23} to \eqref{eq4_29} word by word, it follows that $S(t)\mathscr{K}$ is bounded in $\H^{\delta}$ uniformly for $t\ge 0$. This implies that $\mathscr{B}_1$ is bounded in $\H^{\delta}$, which completes the proof.
\end{proof}
\subsubsection{The H\"{o}lder continuity}
Let's firstly prove the $\H^{\delta-1}$ boundedness of operator $\frac{d}{dt}S(t)$ on $\R^{+}\times \mathscr{B}_1$.
\begin{lemma}\label{le4.9}
  Given $z_0=(u_0,u_1)\in \mathscr{B}_1$, then $\frac{d}{dt}S(t)z_0=(u_t,u_{tt})\in L^\infty(\R^{+};\H^{\delta-1})$ and for any $t\ge 0$,
  \begin{equation}\label{eq6.14}
    \left\|\frac{d}{dt}S(t)z_0\right\|_{\H^{\delta-1}}\le Q(\|\mathscr{B}_1\|_{\H^{\delta}}).
  \end{equation}
\end{lemma}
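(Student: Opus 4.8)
The plan is to read off the first component of $\frac{d}{dt}S(t)z_0=(u_t,u_{tt})$ directly from the invariance of $\mathscr{B}_1$, and to recover the second component $u_{tt}$ from equation \eqref{eq1} itself, estimating each term in the negative-order space $H^{\delta-1}({\mathbb{T}^3})$. Since $\mathscr{B}_1$ is positive invariant (Lemma \ref{le6.7}) and bounded in $\H^{\delta}=H^{1+\delta}({\mathbb{T}^3})\times H^{\delta}({\mathbb{T}^3})$, the whole trajectory satisfies $S(t)z_0=(u(t),u_t(t))\in\mathscr{B}_1$ for all $t\ge 0$, whence
$$\|u(t)\|_{H^{1+\delta}({\mathbb{T}^3})}+\|u_t(t)\|_{H^{\delta}({\mathbb{T}^3})}\le \|\mathscr{B}_1\|_{\H^{\delta}},\quad t\ge 0.$$
Because the first slot of $\H^{\delta-1}$ is exactly $H^{\delta}({\mathbb{T}^3})$, the bound on $u_t$ disposes of the first component at once.

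For the second component I would solve \eqref{eq1} for $u_{tt}$,
$$u_{tt}=\phi-g(u_t)+\Delta u-u-f(u),$$
and bound each term in $H^{\delta-1}({\mathbb{T}^3})$. The linear contributions are immediate: $\|\Delta u\|_{H^{\delta-1}}\le C\|u\|_{H^{1+\delta}}$, while $u$ and $\phi$ both lie in $L^2\subset H^{\delta-1}$ (recall $\delta-1<0$). For the two nonlinear terms I would use the dual embedding $L^{\frac{6}{5-2\delta}}({\mathbb{T}^3})\hookrightarrow H^{\delta-1}({\mathbb{T}^3})$, which is the transpose of the Sobolev embedding $H^{1-\delta}({\mathbb{T}^3})\hookrightarrow L^{\frac{6}{1+2\delta}}({\mathbb{T}^3})$ already invoked in Lemma \ref{lem5.5}. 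Reproducing the computation in \eqref{eq4_23}, since $5-2\delta\ge p$ the growth condition \eqref{F3} gives $\|f(u)\|_{H^{\delta-1}}\le C\|f(u)\|_{L^{6/(5-2\delta)}}\le Q(\|u\|_{H^1})$; and \eqref{G1} yields $\|g(u_t)\|_{H^{\delta-1}}\le C\|g(u_t)\|_{L^{6/(5-2\delta)}}\le C(1+\|u_t\|_{L^{6m/(5-2\delta)}}^{m})$.

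The single place where the restriction on $m$ enters, and the only genuine point to verify, is that the $H^{\delta}$-regularity of $u_t$ supplies enough integrability to control the damping term, i.e. that $u_t\in L^{6m/(5-2\delta)}$. This follows from $H^{\delta}({\mathbb{T}^3})\hookrightarrow L^{6/(3-2\delta)}({\mathbb{T}^3})$ together with the elementary inequality $\frac{6m}{5-2\delta}\le\frac{6}{3-2\delta}$, equivalently $3m-5\le 2\delta(m-1)$, which holds because $m\le\frac75$ forces $3m<5$ so that the left-hand side is already negative. Combining the five bounds gives $\|u_{tt}(t)\|_{H^{\delta-1}}\le Q(\|\mathscr{B}_1\|_{\H^{\delta}})$ uniformly in $t$, and together with the first-component bound this yields \eqref{eq6.14}. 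As in Lemma \ref{lem5.5}, the formal manipulation (which uses $A^{2\delta}$ against negative-order test functions) is made rigorous through the Galerkin scheme \eqref{apeq}, passing to the limit with lower semicontinuity of norms. I expect the only real obstacle to be the damping term, precisely the verification that $H^{\delta}$-control of $u_t$ outruns its $L^{6m/(5-2\delta)}$-growth, which is exactly where $m\le\frac75$ is used.
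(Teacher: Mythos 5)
Your proposal is correct and follows essentially the same route as the paper: read the bound on $u_t$ off the positive invariance of $\mathscr{B}_1$ in $\H^{\delta}$, solve \eqref{eq1} for $u_{tt}$, and estimate $f(u)$ and $g(u_t)$ in $H^{\delta-1}({\mathbb{T}^3})$ via the dual embedding $L^{\frac{6}{5-2\delta}}({\mathbb{T}^3})\hookrightarrow H^{\delta-1}({\mathbb{T}^3})$. The only cosmetic difference is in the damping term, where the paper checks $6m/(5-2\delta)\le 2$ so that the $L^2$-norm of $u_t$ already suffices, while you use the stronger $H^{\delta}$-bound through $H^{\delta}({\mathbb{T}^3})\hookrightarrow L^{\frac{6}{3-2\delta}}({\mathbb{T}^3})$; both verifications are elementary and hinge on $m\le \frac75$.
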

\begin{proof}
  According to the fact that $\mathscr{B}_1$ is positive invariant, we deduce that $S(t)z_0=(u(t),u_t(t))\in \mathscr{B}_1\subset \H^{\delta}$. Therefore, it remains to show that $u_{tt} \in H^{\delta-1}$.

\medskip
  Following from \eqref{eq1}, we have $u_{tt}=\phi-g(u_{t})+\Delta u-u-f(u)$. For the damping term, since $\delta=\min\{(7-2m-p)/2,1/8\}\le 1/8$ and $m\le 7/5$, then $6m/(5-2\delta)\le 2$, we have
  \begin{equation}
    \|g(u_t)\|_{H^{\delta-1}({\mathbb{T}^3})}\le C(1+\|u_t\|^m_{L^{\frac{6m}{5-2\delta}}(\mathbb{T}^3)}) \le C(1+\| u_t\|^{m}).
  \end{equation}
  By the embedding
  \begin{equation}
    L^{\frac{6}{5-2\delta}}(\mathbb{T}^3)=\big(L^{\frac{6}{1+2\delta}}(\mathbb{T}^3))'\cto\big(H^{1-\delta}(\mathbb{T}^3)\big)'=H^{\delta-1}(\mathbb{T}^3),
  \end{equation}
  we can obtain that
  \begin{equation}
    \|f(u)\|_{H^{\delta-1}({\mathbb{T}^3})}\le C\|f(u)\|_{L^{\frac{6}{5-2\delta}}(\mathbb{T}^3)}\le C(1+\|u\|^p_{L^{\frac{6p}{5-2\delta}}(\mathbb{T}^3)}).
  \end{equation}
  It follows form $\delta \le (5-p)/2$ that $6p/(5-2\delta)\le 6$, therefore
  \begin{equation}
    \|f(u)\|_{H^{\delta-1}({\mathbb{T}^3})}\le C(1+\|u\|^p_{L^{\frac{6p}{5-2\delta}}(\mathbb{T}^3)}) \le C(1+\| u\|^{p}_{H^1(\mathbb{T}^3)}).
  \end{equation}

\medskip
  Consequently, all the right terms in equation $u_{tt}=\phi-g(u_{t})+\Delta u-u-f(u)$ belong to $L^\infty(\R^{+};H^{\delta-1})$, thus $u_{tt}\in L^\infty(\R^{+};H^{\delta-1})$ and $\|u_{tt}\|_{L^\infty(\R^{+};H^{\delta-1})}\le Q(\|\mathscr{B}_1\|_{\H^{\delta}})$. The proof is complete.
\end{proof}
Now, we can verify the H\"{o}lder continuity.
\begin{lemma}\label{le6.10}
  The mapping $t\mapsto S(t)z$ is H\"older continuous of order $\delta$ on $[0,t_*]$ for any $t_{*}>0$ and  $z\in\mathscr{B}_1$.
\end{lemma}
\begin{proof} For $z\in \mathscr{B}_1$, and $t_1,t_2\in [0,t_{*}]$ we have
  \begin{equation}\label{eq6.16}
    \begin{aligned}
      \|S(t_1)z-S(t_2)z\|_{\H} & \le \|S(t_1)z-S(t_2)z\|^{\delta}_{\H^{\delta-1}}\|S(t_1)z-S(t_2)z\|^{1-\delta}_{\H^{\delta}}                                             \\
                               & =\left\|\int_{t_1}^{t_2}\frac{d}{dt}S(s)z ds\right\|_{\H^{\delta-1}}^{\delta}\|S(t_1)z-S(t_2)z\|^{1-\delta}_{\H^{\delta}}                \\
                               & \le \left|\int_{t_1}^{t_2}\left\|\frac{d}{dt}S(s)z\right\|_{\H^{\delta-1}}ds\right|^{\delta}\|S(t_1)z-S(t_2)z\|^{1-\delta}_{\H^{\delta}}
    \end{aligned}
  \end{equation}
  and
  \begin{equation}\label{eqq6.17}
    \|S(t_1)z-S(t_2)z\|_{\H^{\delta}}\le \|S(t_1)z\|_{\H^{\delta}}+\|S(t_2)z\|_{\H^{\delta}}\le 2\|\mathscr{B}_1\|_{\H^{\delta}}.
  \end{equation}
  According to Lemma \ref{le4.9}, we have
  \begin{equation}\label{eqq6.18}
    \left|\int_{t_1}^{t_2}\left\|\frac{d}{dt}S(s)z\right\|_{\H^{\delta-1}}ds\right|\le\left\|\frac{d}{dt}S(\cdot)z\right\|_{L^\infty(\R^{+};\H^{\delta-1})}|t_1-t_2|\le Q(\|\mathscr{B}_1\|_{\H^{\delta}})|t_1-t_2|.
  \end{equation}
  Substituting \eqref{eqq6.17}$, $ \eqref{eqq6.18} into \eqref{eq6.16}, we obtain
  \begin{equation*}
    \|S(t_1)z-S(t_2)z\|_{\H}\le Q(\|\mathscr{B}_1\|_{\H^{\delta}}) |t_1-t_2|^\delta.
  \end{equation*}
\end{proof}

\begin{proof}[\bf{Proof of Theorem \ref{th4.1}}]By Theorem \ref{weak_4}, Thoerem \ref{qs} and Lemma \ref{le4.9}, all assumptions of Theorem \ref{th4.3} are satisfied, thus the local dynamical system $(S(t),\mathscr{B}_1)$ has an exponential attractor $\mathscr{A}_{\text{exp}}$ such that
  $$dist_{\H}(S(t)\mathscr{B}_1,\mathscr{A}_{\text{exp}})\le \C e^{-\nu t}, \forall t\ge 0,$$
  for some $\nu>0$.

  Recall that $\mathscr{B}_1$ is an exponential attracting set, then apply the transitivity property of exponential attraction\cite{fab04}, we conclude that $\mathscr{A}_{\text{exp}}$ attracts $B$ exponentially.
\end{proof}


\end{document}